\newcommand{\rt}{\rightarrow}
\newcommand{\lrt}{\longrightarrow}
\newcommand{\st}{\stackrel}
\newcommand{\C}{\mathbb{C}}
\newcommand{\K}{\mathbb{K}}
\newcommand{\Z}{\mathbb{Z}}
\newcommand{\SA}{\mathscr{A}}
\newcommand{\SF}{\mathscr{F}}
\newcommand{\sS}{\mathscr{S}}
\newcommand{\ST}{\mathscr{T}}
\newcommand{\SU}{\mathscr{U}}
\newcommand{\SX}{\mathscr{X}}
\newcommand{\SY}{\mathscr{Y}}
 \newcommand{\SO}{\mathscr{O}}
\newcommand{\CF}{\mathcal{F} }
\newcommand{\CS}{\mathcal{S} }
\newcommand{\Mod}{{\rm{Mod\mbox{-}}}}
\newcommand{\Flat}{{\rm{Flat}}\mbox{-}}
\newcommand{\OC}{\mathbb{OC}}
\newcommand{\inc}{{\rm{inc}}}
\newcommand{\Add}{{\rm{Add}}}
\newcommand{\pd}{{\rm{pd}}}
\newcommand{\Ker}{{\rm{Ker}}}
\newcommand{\Prj}{{\rm{Prj}\mbox{-}}}
\newcommand{\SWC}{{S\rm{WC}}\mbox{-}R}
\newcommand{\SSF}{{S\rm{SF}}\mbox{-}R }
\newcommand{\SOS}{{\mathbb{O}S\rm{SF}}\mbox{-}R}
\newcommand{\Hom}{{\rm{Hom}}}
\newcommand{\homf}{{\mathcal{H}}om}
\newcommand{\Ext}{{\rm{Ext}}}
\theoremstyle{plain}
\newtheorem{theorem}{Theorem}[section]
\newtheorem{corollary}[theorem]{Corollary}
\newtheorem{lemma}[theorem]{Lemma}
\newtheorem{facts}[theorem]{Facts}
\newtheorem{proposition}[theorem]{Proposition}
\theoremstyle{definition}
\newtheorem{definition}[theorem]{Definition}
\newtheorem{example}[theorem]{Example}
\newtheorem{remark}[theorem]{Remark}
\theoremstyle{plain}
\theoremstyle{definition}
\numberwithin{equation}{section}
\begin{document}

\title[The Homotopy Category of Strongly flat modules]{The homotopy category of strongly flat modules}

\author[J. Asadollahi and S. Sadeghi ]{ Javad Asadollahi and Somayeh Sadeghi}

\address{Department of Pure Mathematics, Faculty of Mathematics and Statistics, University of Isfahan, P.O.Box: 81746-73441, Isfahan, Iran}
\email{asadollahi@sci.ui.ac.ir, asadollahi@ipm.ir }

\address{School of Mathematics, Institute for Research in Fundamental Sciences (IPM), P.O.Box: 19395-5746, Tehran, Iran}
\email{somayeh.sadeghi@ipm.ir }

\makeatletter \@namedef{subjclassname@2020}{\textup{2020} Mathematics Subject Classification} \makeatother

\subjclass[2020]{13B30, 13D05, 18N55, 18A40, 18G80, 16G20}

\keywords{Strongly flat modules, homotopy categories, well generated triangulated categories}

\begin{abstract}
In this paper, we plan to build upon significant results by Amnon Neeman regarding the homotopy category of flat modules to study ${\mathbb{K}}({S\rm{SF}}\mbox{-}R)$, the homotopy category of $S$-strongly flat modules, where $S$ is a multiplicatively closed subset of a commutative ring $R$. The category ${\mathbb{K}}({S\rm{SF}}\mbox{-}R)$ is an intermediate triangulated category that includes ${\mathbb{K}}({\rm{Prj}\mbox{-}} R)$, the homotopy category of projective $R$-modules, which is always well generated by a result of Neeman, and is included in ${\mathbb{K}}({\rm{Flat}}\mbox{-} R)$, the homotopy category of flat $R$-modules, which is well generated if and only if $R$ is perfect, by a result of \v{S}\'{t}ov\'{i}\v{c}ek. We analyze corresponding inclusion functors and the existence of their adjoints. In this way, we provide a new, fully faithful embedding of the homotopy category of projectives to the homotopy category of $S$-strongly flat modules. We introduce the notion of $S$-almost well generated triangulated categories. If $R$ is an $S$-almost perfect ring, ${\mathbb{K}}({\rm{Flat}}\mbox{-} R)$ is $S$-almost well generated. We show that the converse is true under certain conditions on the ring $R$. We hope that this approach provides insights into the largely mysterious class of $S$-strongly flat modules.
\end{abstract}

\maketitle


\section{Introduction}
Throughout, $R$ is a commutative ring with identity, $S \subset R$  is a multiplicative subset which may contain some zero-divisors and $R_S$ denotes the localization of $R$ with respect to $S$. An $R$-module $C$ is called $S$-weakly cotorsion if $\Ext^1_R(R_S, C)=0$. An $R$-module $F$  is $S$-strongly flat if $\Ext^1_R(F, C)=0$, for every $S$-weakly cotorsion module $C$. We let $\SWC$, resp. $\SSF$, denote the class of $S$-weakly cotorsion, resp. $S$-strongly flat, $R$-modules. These classes of modules are introduced in \cite[Section 2]{Tr1}, when $R$ is a commutative integral domain and $R_S$ is the quotient field of $R$.

It is known  that the pair $(\SSF, \SWC)$  is a complete cotorsion pair. In particular,  $\SSF$ is a precovering class and $\SWC$ is a preenveloping class.  This leads to the natural question of  when $\SSF$ is a covering class, a question raised in \cite{Tr2}. In \cite{BS} it is demonstrated that if $R$ is an integral domain and $R_S$ is the quotient field of $R$, then $\SSF$ is a covering class if and only if  $R$ is almost perfect, meaning all proper quotients of $R$ are perfect.   This result is further generalized in \cite{FS} to an arbitrary commutative ring $R$ with zero-divisors where $R_S$ is the total ring of quotients. A more extensive generalization for the case when $S$ may contain zero-divisors is discussed in \cite{BP}. It is shown that all $R$-modules possess $S$-strongly flat covers if and only if $R$ is an $S$-almost perfect ring, i.e.  the localization $R_S$ is a perfect ring and, for every $s \in S$, the quotient ring $R/sR$ is also perfect. This implies that all flat $R$-modules are $S$-strongly flat.

The structure of strongly flat modules has been studied for over two decades, but it remains somewhat mysterious \cite{FS}, \cite{P}, \cite{PS1}, \cite{BP} and \cite{FN}. In \cite{PS1}, the authors proposed a conjecture that they called the Optimistic Conjecture, denoted as $(\OC)$. This conjecture states that if the projective dimension of $R_S$ as an $R$-module doesn't exceed 1, then a flat $R$-module $F$ is $S$-strongly flat if the localized module $S^{-1}F = F_S$ is a projective $R_S$-module, and for every $s \in S$, the quotient module $F/sF$ is a projective $R/sR$-module. For the situations in which the conjecture $(\OC)$ is confirmed to be true, please refer to Remark \ref{Remark 2.2}$(2)$.

On the other hand, the theory of compactly generated triangulated categories implicitly known in algebraic topology, introduced by Neeman and used to simplify proofs of some classical results, \cite{Ne1}, \cite{ Ne2}. Then it has found many applications in algebra. But in both algebra and topology, there are many triangulated categories that are not compactly generated. However, these categories still share many important properties with compactly generated ones. To better understand this, Neeman defined well generated categories. A triangulated category $\ST$, with coproducts, is well generated if it has an $\alpha$-perfect set of $\alpha$-small generators, for some regular cardinal $\alpha$. Such a set is called an $\alpha$-compact generating set of $\ST$ \cite{N2}. In \cite{N1}, Neeman proved that for every ring $R$, the homotopy category of complexes of projective $R$-modules $\K(\Prj R)$  is always  well generated and hence satisfies Brown representability. One specific consequence of Brown's representability is the existence of certain adjoint functors. So the adjoint pairs of functors
 \[ \xymatrix{\K(\Prj R)\ar@<0.5ex>[rr]^{j_!}& &\K(\Flat R)\ar@<0.5ex>[ll]^{j^*} } \]
always exists, where $\K(\Flat R)$ is the homotopy category of complexes of flat $R$-modules.  Therefore,  there is a new description for the category $\K(\Prj R)$. More precisely, the category $\K(\Prj R)$ is equivalent to the Verdier  quotient  $\K(\Flat R)/\CS$, for some suitable subcategory $\CS$ of $\K(\Flat R)$, \cite[Remark 2.12]{N1}.
Several characterizations of the objects in the subcategory $\CS\subset\K(\Flat R)$ are presented in \cite[Facts 2.14]{N1}. Moreover, in \cite{N}, it is shown that the quotient map from $\K(\Flat R)$ to $\K(\Flat R)/\CS$ always has a right adjoint. So this gives a new fully faithful embedding of $\K(\Prj R)$ into $\K(\Flat R)$.

Motivated by the results of Neeman, that briefly recalled above, we study the homotopy category of $S$-strongly flat $R$-modules, where $R$ is a commutative ring and $S$ is a multiplicative subset of $R$. To this end, we introduce and study $\SOS$, the class of optimistically $S$-strongly flat $R$-modules. A flat $R$-module $F$ is  optimistically $S$-strongly flat if $F_S$ is a projective $R_S$-module and, for every $s\in S$, $F/sF$ is a projective $R/sR$-module. It is clear that every $S$-strongly flat $R$-module is optimistically $S$-strongly flat. Further,  if optimistic conjecture holds, the class of optimistically $S$-strongly flat modules is precisely the class of $S$-strongly flat modules. In case the multiplicative subset $S$ consists of powers of a single element $r \in R$, the class of optimistically $S$-strongly flat modules is equal to the class of $r$-very flat modules, that introduced and studied in deep in \cite{PS2}.

We use this class to show that the quotient map $\K(\SSF)\lrt \K(\SSF)/\sS$ has a right adjoint, provided optimistic conjecture holds, where $\sS$ is the full subcategory of $\K(\SSF)$ consisting of all objects annihilated by the functor $j^*$. This gives a new fully faithful embedding of $\K(\Prj R)$ into $\K(\SSF)$. Finally, we introduce the notion of $S$-almost well generated triangulated categories and demonstrate when the category $\K(\Flat R)$ is $S$-almost well generated.
Let us mention that in our proofs we have used the important fact proved by Neeman \cite[Remark 2.15]{N1} that every acyclic complex of projective modules with flat kernels is contractible.

The paper is structured as follows. In Section \ref{Sec: Preliminaries}  we recall and collect some known facts and results  specially about $S$-strongly flat modules that we need through the paper.

 In Section \ref{Sec: Strongly Flat Precover}, motivated by the work of Enochs and Garc\'{i}a Rozas \cite{EG},  we introduce and study the notions of $S$-strongly flat complexes. An acyclic complex $(F, \delta)$ is called $S$-strongly flat if, for every $i$, all kernels of the maps $\delta^i: F^i\lrt F^{i+1}$ are $S$-strongly flat. By definition it is clear that every $S$-strongly flat complex is a flat complex, that is, an acyclic complex with all kernels flat $R$-modules. In \cite{AEGO}, it is proved that every complex over a ring has a flat cover.  We show that if  projective dimension of $R_S$ as an $R$-module doesn't exceed $1$, then every complex over a ring $R$ has an $S$-strongly flat precover. We also show that the class of $S$-strongly flat complexes is covering if and only if $R$ is $S$-almost perfect (Theorem \ref{Covering}).

 Section \ref{Sec: Homotopy Category of Optimistically Strongly Flat modules} is the main section of this paper. We introduce the class of optimistically $S$-strongly flat $R$-modules $\SOS$ and study its homotopy category. We show that the natural inclusion  $e: \K(\Prj R)\lrt \K(\SOS)$  has a right adjoint $e^*: \K(\SOS)\lrt \K(\Prj R)$ (Proposition \ref{Right Adjoint}).
This, in turn, provides a description for $\K(\Prj R)$ as a Verdier quotient of $\K(\SOS)$. We study the kernel of $e^*$ and show that it consist of all acyclic complexes of modules in $\SOS$ whose all kernels are in $\SOS$ (Theorem \ref{Theorem 8.6}). Such complexes will be called, optimistically $S$-strongly flat complexes.
If we assume that optimistic conjecture holds for the pair $(R, S)$, these are $S$-strongly flat complexes.

Toward the end of this section, it is shown that the category $\K(\SOS)$ is a tensor triangulated category (Corollary \ref{Closed Under Tensor}).

In Section \ref{Sec:  Some Adjoints in Homotopy Category} we show that if optimistic conjecture holds for the pair $(R, S)$, then the functor $e^*:\K(\SSF)\lrt \K(\Prj R)$ has a right adjoint $e_*:\K(\Prj R)\lrt \K(\SSF)$ (Theorem \ref{Existence of adjoint of quotient}). Hence $e_*$ gives a non-obvious embedding of $\K(\Prj R)$ into $\K(\SSF)$.  Moreover, we observe that the inclusion functor $\K(\SSF)\lrt \K(\Mod R)$ has a right adjoint (Theorem \ref{Existence of adjoint of inclusion}).

Neeman in \cite{N}, in order to prove that every object in $\K(\Flat R)$ admits a flat cover,  defined an auxiliary ring $T=T(R)$ and an adjoint pair of functors $(\inc, C)$, where $\inc$ is a fully faithful functor from the category of complexes of $R$-module $\C(\Mod R)$ to the category of $T$-modules. In the rest of  Section \ref{Sec: Some  Adjoints in Homotopy Category}, we briefly recall the ring $T$ and functors $\inc$ and $C$ and their properties as well.  Then we study the behaviour of the class of $S$-strongly flat complexes and the class of optimistically $S$-strongly flat complexes under the functors $\inc$ and $C$.

 In the last section of the paper, we introduce the notion of  $S$-almost well generated triangulated categories and provide some examples (Example \ref{Examples}). For instance, $\K(\SSF)$ is always $S$-almost well generated. We show that if $R$ is an $S$-almost perfect ring then $\K(\Flat R)$ is $S$-almost well generated (Theorem \ref{S-Almost WG}). Further, we show the converse in a special case. For instance, let $R$ be a commutative Noetherian ring of Krull dimension $1$ and $S\subset R$ be a multiplicative subset consisting of some nonzero-divisors in $R$. Then $R$ is $S$-almost perfect, provided $\K(\Flat R)$ is $S$-almost well generated (Theorem \ref{Krull Dimension}).

\section{Preliminaries}\label{Sec: Preliminaries}
 Let $\SA$ be an abelian category. Let $\SX$ be the class of objects in $\SA$. An $\SX$-precover of an object $M\in \SA$, is a morphism $f:X\lrt M$ with $X\in\SX$, such that, for every $X'\in\SX$, the induced morphism $\Hom(X', X)\lrt\Hom(X', M)\lrt 0$ is exact. If, further, for every $g: X\lrt X$, $f\circ g=f$ implies that $g$ is an isomorphism, then $f:X\lrt M$ is an $\SX$-cover. A class $\SX$ is precovering, resp. covering, if every object in $\SA$ has an $\SX$-precover, resp. $\SX$-cover. Dually, we can define the notions of $\SX$-preenvelope, $\SX$-envelope, preenveloping and enveloping classes.

Recall that a pair $(\SX, \SY)$ of classes of objects of $\SA$ is a cotorsion pair if $\SX^{\perp_1}=\SY$ and ${}^{\perp_1}\SY=\SX$, where orthogonal is taken with respect to $\Ext^1$.

A cotorsion pair $(\SX, \SY)$ is complete if for every object $M$ in $\SA$, there are short exact sequences
\[0 \rt Y \rt X \rt M \rt 0  \ \ {\rm{and}} \ \ 0 \rt M \rt Y' \rt X' \rt 0,\]
such that $X, X' \in \SX$ and $Y, Y' \in \SY.$ This, in particular, implies that $\SX$ is a precovering class and $\SY$ is a preenveloping class.

A cotorsion pair $(\SX, \SY)$ is hereditary if $\SX$ is closed under taking kernels of epimorphisms between objects of $\SX$ and $\SY$ is closed under taking cokernels of monomorphisms between objects of $\SY$.

\subsection{Strongly flat modules}\label{Subsec: StronglyFlat}
As it is mentioned in the introduction, $\SWC$ is the right orthogonal of $R_S$  and $\SSF$ is the left orthogonal of $\SWC$, both orthogonals are with respect to $\Ext^1_R$. It is clear that every cotorsion $R$-module is $S$-weakly cotorsion and every $S$-strongly flat $R$-module is flat.

In the following, we collect some facts about these two classes of $R$-modules that we need through the paper. Recall that a ring $R$ is perfect if every flat $R$-module is projective. $R$ is called $S$-almost perfect if $R_S$ is a perfect ring and for every $s\in S$, $R/sR$ is a perfect ring.

\begin{facts}
The following hold true.
\begin{itemize}
\item[$(i)$] The class $\SSF$ is closed under extensions, coproducts and direct summands.

\item [$(ii)$] If $F$ is an $S$-strongly flat $R$-module, then the localization $F_S$ of $F$ at $S$ is projective as $R_S$-module and, for every $s \in S$, the quotient module $F/sF$ is projective $R/sR$-module \cite[Lemma 3.1]{BP}.

\item[$(iii)$] If projective dimension of $R_S$ as an $R$-module doesn't exceed $1$,  then the projective dimension of every $S$-strongly flat $R$-module also doesn't exceed $1$.

\item [$(iv)$] The class $\SWC$ is an enveloping class. The class $\SSF$ is covering if and only if every flat $R$-module is $S$-strongly flat if and only if $R$ is an $S$-almost perfect ring \cite[Theorem 7.9]{BP}.
\end{itemize}
\end{facts}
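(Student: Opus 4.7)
The four assertions have mixed status: items (ii) and (iv) are explicitly quoted from \cite{BP} and in a write-up I would simply invoke those references; the genuine proof work is concentrated in (i) and (iii). The plan is to dispatch (i) formally from the identity $\SSF={}^{\perp_1}\SWC$ and to handle (iii) by a dimension-shifting argument powered by the hypothesis $\pd_R R_S\leq 1$.

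For (i), fix an arbitrary $C\in\SWC$ and test against $\Ext^1_R(-,C)$. Given an extension $0\rt F'\rt F\rt F''\rt 0$ with $F',F''\in\SSF$, the three-term piece $\Ext^1_R(F'',C)\rt\Ext^1_R(F,C)\rt\Ext^1_R(F',C)$ of the long exact sequence has vanishing flanks, forcing $\Ext^1_R(F,C)=0$ and hence $F\in\SSF$. Coproducts are treated by $\Ext^1_R(\bigoplus_i F_i,C)\cong\prod_i\Ext^1_R(F_i,C)$, and a direct summand $F'$ of some $F\in\SSF$ inherits the vanishing because $\Ext^1_R(F',C)$ is a retract of $\Ext^1_R(F,C)=0$. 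All three closure properties are therefore formal consequences of the left-orthogonal description.

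For (iii), pick $F\in\SSF$ and an arbitrary $R$-module $M$; the goal is $\Ext^2_R(F,M)=0$, since this forces $\pd_R F\leq 1$. Embed $M$ into an injective module to obtain $0\rt M\rt I\rt N\rt 0$. Injective modules are cotorsion, hence (since $R_S$ is flat) $S$-weakly cotorsion, so $I\in\SWC$. Applying $\Hom_R(R_S,-)$ to this sequence isolates the segment $\Ext^1_R(R_S,I)\rt\Ext^1_R(R_S,N)\rt\Ext^2_R(R_S,M)$; the left term vanishes because $I$ is injective, and the right term vanishes by the standing hypothesis $\pd_R R_S\leq 1$. Hence $N\in\SWC$. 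A dimension shift along $I$ yields $\Ext^2_R(F,M)\cong\Ext^1_R(F,N)$, and this last group is zero because $F\in\SSF$ and $N\in\SWC$.

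The only delicate point in this plan is choosing an embedding $M\hookrightarrow I$ whose cokernel simultaneously sits in $\SWC$ and allows the dimension shift $\Ext^2_R(F,-)\to\Ext^1_R(F,-)$ to land on a module where $F$-vanishing is guaranteed; the injective envelope satisfies both requirements precisely because $\pd_R R_S\leq 1$ controls the relevant $\Ext^2_R(R_S,-)$. Parts (ii) and (iv) lie deeper and I would not reproduce them: (ii) requires a careful analysis of how $S$-strong flatness interacts with the base changes $-\otimes_R R_S$ and $-\otimes_R R/sR$, and (iv) combines the Salce/Eklof--Trlifaj machinery for cotorsion pairs with the characterization of $S$-almost perfect rings, both carried out in \cite{BP}.
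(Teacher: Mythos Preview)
Your arguments for (i) and (iii) are correct. The paper itself does not supply proofs here: the statement is presented as a list of \emph{Facts}, with (ii) and (iv) cited from \cite{BP} and (i), (iii) left as standard observations. Your write-up therefore fills in details the paper deliberately omits, and does so soundly. One small redundancy: the sentence establishing $I\in\SWC$ is never used, since the dimension shift only needs $I$ injective and the membership $N\in\SWC$, which you prove directly from $\pd_R R_S\le 1$.

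It is worth noting an alternative route to (iii) that is closer to how such facts are usually justified in this literature: since $(\SSF,\SWC)$ is the cotorsion pair cogenerated by the single module $R_S$, the Eklof--Trlifaj theorem identifies $\SSF$ with the class of direct summands of $\{R,R_S\}$-filtered modules, and projective dimension $\le 1$ is preserved under transfinite extensions and summands. Your dimension-shifting argument is more self-contained and avoids invoking that machinery, which is a reasonable trade-off.
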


\begin{remark}\label{Remark 2.2}
$(1)$. A special $\SSF$-precover of a module $M$ is an $\SSF$-precover whose kernel is $S$-weakly cotorsion. Since $(\SSF, \SWC)$ is a complete cotorsion pair, every module admits a special $\SSF$-precover. Additionally, by the same reason, we infer that every module admits an special $\SWC$-preenvelope.

$(2)$. The validity of the converse of the Statement $(ii)$ of the above facts, is known as the Optimistic Conjecture $(\OC)$ \cite[1.1]{PS1}. More precisely, $(\OC)$, reads as follows:

\textit{Optimistic Conjecture}. Let $R$ be a commutative ring and $S$ be a multiplicative subset of $R$ such that the projective dimension of $R_S$ as an $R$-module doesn't exceed $1$. Then a flat $R$-module $F$ is $S$-strongly flat if and only if the $R_S$-module $F_S$ is projective and the $R/sR$-module $F/sF$ is projective for every $s\in S$.

Under the following conditions on the pair $(R, S)$, the $(\OC)$ holds.
\begin{itemize}
\item[$(i)$] $S$ is a countable multiplicative subset, \cite[Theorem 1.3]{PS1}.

\item[$(ii)$] $S$ is a multiplicative subset consisting of (some) nonzero-divisors in $R$ and projective dimension of $R_S$ as an $R$-module doesn't exceed $1$, \cite[Theorem 1.4]{PS1}.
For instance, if $R$ is a Matlis domain, that is a commutative domain such that projective dimension of $R_S$ as an $R$-module is at most $1$, where $S=R\smallsetminus \lbrace 0\rbrace$, then $(\OC)$ is valid.

\item[$(iii)$] $S$ is a multiplicative subset such that the $S$-torsion in $R$ is bounded and projective dimension of $R_S$ as an $R$-module doesn't exceed $1$, \cite[Theorem 1.5]{PS1}.

\item[$(iv)$] $R$ is an $S$-$h$-nil ring. That  is,  for every $s\in S$, the ring $R/sR$ is semilocal of  Krull dimension 0, \cite[Proposition 7.13]{BP}. For instance, if for every $s\in S$, $R/sR$ is a perfect ring, then $R$ is $S$-$h$-nil, \cite[Lemma 7.1]{BP}.
\end{itemize}
 \end{remark}

\subsection{Complexes of modules}\label{Subsec: Complex}
Let $R$ be a ring. The category of complexes in $\Mod R$ is denoted by $\C(\Mod R)$ and the homotopy category of complexes of $R$-modules is denoted by $\K(\Mod R)$.

A complex $(X, \delta)$ in $\C(\Mod R)$ will be denoted cohomologically, i.e.
\[\begin{tikzcd}
 (X, \delta): &\cdots\rar& X^{i-1}\rar{\delta^{i-1}}&X^{i}\rar{\delta^i}& X^{i+1} \rar& \cdots.
	\end{tikzcd}\]

It is known that the category $\C(\Mod R)$ is an abelian category with enough projective and enough injective objects.
Let $(X, \delta_X)$ and $(Y, \delta_Y)$ be two complexes of $R$-modules. We let $\homf(X, Y)$ to be the complex of abelian groups with
\[\homf(X, Y)^n=\prod_{i\in \mathbb{Z}} \Hom(X^i, Y^{n+i})\]
and if $f\in \homf(X, Y)^n$ then
\[(\delta^n f)^{i}=\delta_Y^{n+i}\circ f^i-(-1)^n f^{i+1}\circ \delta^i_X.\]
Moreover, we let $X\otimes Y$ be the complex of abelian groups with
\[(X\otimes Y)^n= \bigoplus _{i+j=n} X^i\otimes_R Y^j\]
and whose differential is given on elements $(x,y)$ of homogeneous degree by the following formula
\[\delta_{X\otimes Y}(x, y)=(\delta_Xx, y)+(-1)^{|x|}(x, \delta_Y y).\]

\section{Strongly flat precover of complexes}\label{Sec: Strongly Flat Precover}
  In  \cite{EG} the authors defined and studied flat complexes over any ring. By Definition 2.5 of \cite{EG}, a flat complex is an acyclic complex of flat $R$-modules such that all of its kernels are flat. Motivated by this definition, we introduce and study $S$-strongly flat complexes and prove that, if the projective dimension of $R_S$ as an  $R$-module doesn't exceed $1$, then any complex over a ring $R$ has an $S$-strongly flat precover. Further, we show that every complex has an $S$-strongly flat cover if and only if $R$ is an $S$-almost perfect ring.

We start with the following definitions.

\begin{definition}\label{DefinitionComplexes}
A complex $(F, \delta)$  of $R$-modules is called $S$-strongly flat  if it is acyclic and for every $i \in \Z$, the kernel $K^i$ of the map $\delta^i: F^i\lrt F^{i+1}$ is  $S$-strongly flat. Similarly, an acyclic complex $(C, \delta)$ is called  $S$-weakly cotorsion if all the kernels are $S$-weakly cotorsion $R$-modules.
\end{definition}

\begin{remark}\label{Properties of StronglyFlat Complexes}
 $(i)$ Since the class of $S$-strongly flat $R$-modules is closed under extensions, then in an $S$-strongly flat complex $F$ each module $F^i$ is $S$-strongly flat. Moreover, since $\SSF \subset \Flat R$, then every $S$-strongly flat complex is a flat complex.

$(ii)$ Since the class of $S$-weakly cotorsion $R$-modules is closed under extensions, then in an $S$-weakly cotorsion complex $C$ each module $C^i$ is $S$-weakly cotorsion. Also, since every (Enochs') cotorsion $R$-module is $S$-weakly cotorsion, every cotorsion complex, i.e. every acyclic complex with cotorsion kernels \cite[Definition 3.3 (ii)]{EG}, is $S$-weakly cotorsion.

\end{remark}

Recall that a complex $(X, \delta)$ of $R$-modules has projective dimension at most $n$ in $\C(\Mod R)$ if and only if $X$ is acyclic and projective dimension of $\Ker \delta^i$, for every $i$, is at most $n$, see \cite[Theorem]{EG1}.

\begin{lemma}\label{ProjDim}
Assume that the projective dimension of $R_S$ as an $R$-module doesn't exceed $1$. Then for every $S$-strongly flat complex $(F, \delta)$, the projective dimension of complex $F$ is at most $1$.
\end{lemma}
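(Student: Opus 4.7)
The plan is to combine two ingredients that are already on the table: the characterization of projective dimension of complexes recalled just before the lemma, and Facts $2.1(iii)$, which bounds the projective dimension of an $S$-strongly flat module under the standing hypothesis.

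First, I would invoke the theorem from \cite{EG1} quoted immediately above the statement: a complex $(X,\delta)$ has projective dimension at most $n$ in $\C(\Mod R)$ precisely when $X$ is acyclic and $\pd_R(\Ker\delta^i)\le n$ for every $i\in\Z$. By the very definition of an $S$-strongly flat complex (Definition \ref{DefinitionComplexes}), $(F,\delta)$ is acyclic and each kernel $K^i=\Ker\delta^i$ lies in $\SSF$, so the first requirement is met and it remains only to bound $\pd_R K^i$ by $1$ for every $i$.

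Next, I would appeal to Facts $2.1(iii)$: under the hypothesis $\pd_R R_S\le 1$, every $S$-strongly flat $R$-module has projective dimension at most $1$. Applying this pointwise gives $\pd_R K^i\le 1$ for all $i$, and plugging this back into the EG1 characterization with $n=1$ yields $\pd_{\C(\Mod R)}F\le 1$, which is exactly the claim.

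There is essentially no obstacle here beyond correctly citing these two results; the lemma is a direct translation of a modulewise projective dimension bound into a complexwise one. The only small point worth mentioning in the write-up is that the cited theorem of \cite{EG1} genuinely requires acyclicity (so that the kernels assemble into a projective resolution of the complex), which is built into our definition of an $S$-strongly flat complex.
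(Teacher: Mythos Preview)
Your proposal is correct and follows essentially the same approach as the paper: use the definition of an $S$-strongly flat complex to get acyclicity and $S$-strongly flat kernels, apply Facts~2.1$(iii)$ to bound $\pd_R\Ker\delta^i\le 1$, and conclude via the \cite{EG1} characterization of projective dimension of complexes recalled just before the lemma. The paper's write-up is terser and leaves the final appeal to \cite{EG1} implicit, but the argument is identical.
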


\begin{proof}
Let $F$ be an $S$-strongly flat complex, so it is acyclic and for every $i$, $\Ker\delta^i$ is $S$-strongly flat $R$-module.  Since $\pd_R R_S\leq 1$, projective dimension of any $S$-strongly flat $R$-module is at most $1$. Therefore $\pd_R \Ker\delta^i\leq 1$. Hence we get the result.
\end{proof}

\begin{lemma}\label{lem. 4.2}
Let  $R$ be a ring such that any acyclic complex has an $S$-strongly flat precover. Then any complex has an $S$-strongly flat precover.
\end{lemma}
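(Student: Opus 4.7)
My strategy is to reduce the existence of an $S$-strongly flat precover for an arbitrary complex $X$ to the acyclic case, by constructing an acyclic complex $A$ and a chain map $\alpha : A \to X$ with the universal property that every chain map from an $S$-strongly flat (hence acyclic) complex to $X$ factors through $\alpha$. Once such $(A,\alpha)$ is in hand, the hypothesis supplies an $S$-strongly flat precover $\varphi : F \to A$ of $A$, and a routine diagram chase shows that $\psi := \alpha \circ \varphi : F \to X$ is an $S$-strongly flat precover of $X$: given any $g : F' \to X$ with $F' \in \SSF$, one first lifts $g$ through $\alpha$ to obtain $g' : F' \to A$, then factors $g' = \varphi \circ g''$ via the precover property of $\varphi$, and verifies $\psi \circ g'' = g$.

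The first natural candidate for $A$ is the ``telescope'' $A_0 := \bigoplus_n D^n(X^n)$, the direct sum of disk complexes on the components of $X$; this is contractible (hence acyclic) and fits into a short exact sequence $0 \to X[-1] \to A_0 \to X \to 0$ in $\C(\Mod R)$. A direct computation (unwinding the chain map condition in $A_0^k = X^k \oplus X^{k-1}$) shows, however, that a chain map $g : F' \to X$ from an acyclic $F'$ lifts to $A_0$ if and only if $g$ is null-homotopic, and this is not automatic for arbitrary $X$. So $A_0$ by itself is not sufficient, and a richer acyclic complex must be used. To enlarge $A_0$, I would invoke the fact that $\C(\Mod R)$ is a Grothendieck category and the class of acyclic complexes is deconstructible in it. Fixing a regular cardinal $\kappa$ larger than $|R|$ and $|X|$, the isomorphism classes of pairs $(A_i, \alpha_i)$ with $A_i$ a $\kappa$-presented acyclic complex and $\alpha_i : A_i \to X$ a chain map form a set; I would take $A$ to be the direct sum over this set (with the induced $\alpha : A \to X$), which is acyclic as a coproduct of acyclic complexes.

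The main obstacle is verifying the universal property of this $A$: every chain map $f : C \to X$ with $C$ acyclic should factor through $\alpha$. The approach is to write $C$ as a $\kappa$-filtered union of $\kappa$-presented acyclic subcomplexes $C_\lambda$ (permitted by deconstructibility) so that each restriction $f|_{C_\lambda}$ corresponds to some $(A_{i(\lambda)}, \alpha_{i(\lambda)})$ in the indexing set of $A$. The technical heart of the proof is to make these choices compatible along the filtration, so that the local maps $C_\lambda \to A_{i(\lambda)} \hookrightarrow A$ assemble into a single chain map $C \to A$ factoring $f$. Securing this functorial/compatibility step is the most delicate part of the argument; once it is handled, everything else is routine.
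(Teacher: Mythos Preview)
Your high-level plan is exactly the one the paper uses: produce an acyclic complex $A$ with a map $\alpha : A \to X$ through which every map from an $S$-strongly flat (hence acyclic) complex factors, and then compose $\alpha$ with an $S$-strongly flat precover $F \to A$ supplied by the hypothesis. The paper dispatches the first step in one line by citing \cite[Theorem~3.18]{EJX}: every complex admits an acyclic \emph{cover} $E \to X$, and an acyclic cover is in particular an acyclic precover, which is precisely the $(A,\alpha)$ you are after. After that, the composition $F \to E \to X$ is the desired precover by the routine argument you describe.

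Your attempt to build $(A,\alpha)$ by hand, however, has a genuine gap rather than a merely ``delicate'' step. With $A = \bigoplus_i A_i$ indexed by isomorphism classes of pairs $(A_i,\alpha_i)$ of $\kappa$-presented acyclic complexes mapping to $X$, write a large acyclic $C$ as a filtered union $C = \bigcup_\lambda C_\lambda$ and send each $C_\lambda$ to its summand $A_{i(\lambda)} \hookrightarrow A$. These maps land in \emph{different} direct summands of the coproduct, so for $\lambda \le \mu$ the restriction of $C_\mu \to A_{i(\mu)} \hookrightarrow A$ to $C_\lambda$ has no reason to equal $C_\lambda \to A_{i(\lambda)} \hookrightarrow A$; the two live in disjoint summands. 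No choice of the assignment $\lambda \mapsto i(\lambda)$ repairs this, because a single coproduct carries no record of the transition maps in the filtration. A correct from-scratch construction of an acyclic precover needs a transfinite small-object-argument iteration (this is how one proves deconstructible classes are precovering), not a single coproduct. Rather than redoing that work, you should simply invoke the known existence of acyclic covers, as the paper does.
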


\begin{proof}
By \cite[Theorem 3.18]{EJX} we know that for any complex $C$ there is an acyclic cover $E\lrt C$. By assumption, acyclic complex $E$ has an $S$-strongly flat precover $F\lrt E$. Now it is easy to see that the composition $F\lrt E\lrt C$ is an $S$-strongly flat precover of $C$.
\end{proof}

\begin{proposition}\label{Prop. 4.3}
 Let $(C, \delta)$ be an $S$-weakly cotorsion complex. Then, $C$ has an $S$-strongly flat precover, and its kernel is $S$-weakly cotorsion.
\end{proposition}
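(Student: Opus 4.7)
The plan is to mimic the construction of \cite{EG} for flat (pre)covers of cotorsion complexes, with the pair $(\Flat R, {\rm Cot})$ replaced by the complete cotorsion pair $(\SSF, \SWC)$. For each $i \in \Z$, let $K^i = \Ker \delta^i$, which by hypothesis lies in $\SWC$. By completeness of the module-level cotorsion pair, choose a special $\SSF$-precover
\[0 \rt W^i \rt F^i \st{\pi^i}{\rt} K^i \rt 0\]
for each $K^i$, with $F^i \in \SSF$ and $W^i \in \SWC$. Assemble them into a complex $\mathbf{F}$ by setting $\mathbf{F}^n = F^n \oplus F^{n+1}$ with differential $(a,b) \mapsto (b,0)$. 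Then $\mathbf{F}$ is acyclic with $n$-th kernel $F^n \in \SSF$, hence is an $S$-strongly flat complex.

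To define the chain map $\phi: \mathbf{F} \to C$, I would use the Ext-vanishing $\Ext^1_R(F^{n+1}, K^n) = 0$ (valid since $F^{n+1} \in \SSF$ and $K^n \in \SWC$) to lift $\pi^{n+1}$ along the canonical surjection $p^{n+1}: C^n \twoheadrightarrow K^{n+1}$ to a map $\sigma^{n+1}: F^{n+1} \to C^n$ with $p^{n+1}\sigma^{n+1} = \pi^{n+1}$, and set $\phi^n(a,b) = \iota^n\pi^n(a) + \sigma^{n+1}(b)$, where $\iota^n: K^n \hookrightarrow C^n$. A direct calculation using $\delta^n = \iota^{n+1} p^{n+1}$ shows that $\phi$ is a chain map, and a short diagram chase gives surjectivity of each $\phi^n$. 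Elements $(a,b) \in (\Ker\phi)^n$ must satisfy $\pi^{n+1}(b) = 0$ (hence $b \in W^{n+1}$) and $\pi^n(a) = -\tau^n(b)$ for an induced map $\tau^n: W^{n+1} \to K^n$; a routine computation then shows $\Ker\phi$ is acyclic with $n$-th kernel isomorphic to $W^n \in \SWC$, so $\Ker\phi$ is an $S$-weakly cotorsion complex.

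It remains to verify the precover property: for every $S$-strongly flat complex $\mathbf{G}$ with kernels $Z^n$ and every chain map $\psi: \mathbf{G} \to C$, there must exist a chain map $\tilde\psi: \mathbf{G} \to \mathbf{F}$ with $\phi \tilde\psi = \psi$. Equivalently, $\Ext^1_{\C(\Mod R)}(\mathbf{G}, \Ker\phi) = 0$. I expect this to be the main obstacle. Given an extension $0 \to \Ker\phi \to \mathbf{E} \to \mathbf{G} \to 0$, the module-level vanishing $\Ext^1_R(G^n, (\Ker\phi)^n) = 0$ splits it at each degree, and the twistings assemble into a chain map $\theta: \mathbf{G} \to \Ker\phi[1]$ whose null-homotopy must be produced. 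My approach would be to build this null-homotopy inductively: first on each kernel $Z^n$ via $\Ext^1_R(Z^n, W^n) = 0$, then extending along $0 \to Z^n \to G^n \to Z^{n+1} \to 0$ via $\Ext^1_R(Z^{n+1}, (\Ker\phi)^n) = 0$; the hereditary property of $(\SSF, \SWC)$ keeps the successive choices compatible with the differential of $\Ker\phi$.
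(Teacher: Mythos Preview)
Your construction of $\mathbf{F}$, the chain map $\phi$, and the identification of $\Ker\phi$ as an $S$-weakly cotorsion complex are exactly what the paper does (the paper phrases it as building a precover of each short exact sequence $0\to\Ker\delta^i\to C^i\to\Ker\delta^{i+1}\to 0$ and then pasting). The paper simply asserts the precover property as ``easy to see'', whereas you try to justify it via $\Ext^1_{\C(\Mod R)}(\mathbf{G},\Ker\phi)=0$; that reduction is correct and is the standard route.

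There is one genuine slip in your final paragraph: you invoke ``the hereditary property of $(\SSF,\SWC)$'' to make the inductive choices cohere. Proposition~\ref{Prop. 4.3} carries no hypothesis that $\pd_R R_S\le 1$, and without it the pair $(\SSF,\SWC)$ need not be hereditary (cf.\ Remark~3.7 in the paper). Fortunately the hereditary property is not needed. The vanishing you actually use is
\[
\Ext^1_R(Z^n,W^n)=0,\qquad \Ext^1_R(Z^{n+1},(\Ker\phi)^n)=0,\qquad \Ext^1_R(Z^{n+1},W^n)=0,
\]
all of which hold because $Z^n,Z^{n+1}\in\SSF$ and $W^n,(\Ker\phi)^n\in\SWC$. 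Concretely: for each $n$ choose independently a lift $\lambda^n:Z^n\to F^n$ of $\psi^n|_{Z^n}$ along $\pi^n$; set $\rho^n:=\psi^n-\sigma^{n+1}\lambda^{n+1}\circ(G^n\twoheadrightarrow Z^{n+1})$, which one checks lands in $K^n$; lift $\rho^n$ to some $\mu^n:G^n\to F^n$ using that $G^n\in\SSF$ and $\pi^n$ is a precover; finally adjust $\mu^n$ by a map $G^n\to W^n$ (possible since $\Ext^1_R(Z^{n+1},W^n)=0$) so that $\mu^n|_{Z^n}=\lambda^n$. The resulting $\tilde\psi^n=(\mu^n,\mu^{n+1}d_G^n)$ is a chain map with $\phi\tilde\psi=\psi$. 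No induction over $\mathbb{Z}$ and no hereditary assumption are required. With this correction your argument is complete and matches the paper's approach.
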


\begin{proof}
Since $C$ is $S$-weakly cotorsion, for every $i$, there exists a short exact sequence
\[\eta: \ 0\lrt \Ker\delta^i\lrt C^i\lrt \Ker \delta^{i+1}\lrt 0,\]
where $\Ker\delta^i$ and $\Ker\delta^{i+1}$ are $S$-weakly cotorsion. Consider special $S$-strongly flat precovers $F^i\lrt \Ker\delta^i$ and $F^{i+1}\lrt \Ker \delta^{i+1}$. Since $\Ext_R^1(F^{i+1}, \Ker\delta^{i})=0$, we can construct the commutative diagram
\[\begin{tikzcd}
 &	0 \rar  &F^i\rar\dar&F^i\oplus F^{i+1}\rar\dar&F^{i+1}\dar\rar& 0\\
 &	0\rar&\Ker\delta^i\rar &C^i\rar& \Ker\delta^{i+1}\rar & 0
\end{tikzcd}\]
It is easy to see that $0\lrt F^i\lrt F^i\oplus F^{i+1}\lrt F^{i+1}\lrt 0$ is an $S$-strongly flat precover of $\eta$. Moreover, the kernel of this precover is an $S$-weakly cotorsion complex. By pasting together all the diagrams, for every integer $i$, we get an $S$-strongly flat precover of $C$ whose kernel is $S$-weakly cotorsion.
\end{proof}

\begin{lemma}\label{lem. 4.4}
 Assume the projective dimension of $R_S$ as an $R$-module doesn't exceed $1$. Let $0\lrt C\lrt G\lrt H\lrt 0$ be a short exact sequence in $\C(\Mod R)$ such that $H$ is an $S$-strongly flat and $G$ is an $S$-weakly cotorsion complex. Then $C$ has an $S$-strongly flat precover whose kernel is $S$-weakly cotorsion.
\end{lemma}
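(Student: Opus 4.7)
The plan is to construct the required $S$-strongly flat precover of $C$ as the pullback of an $S$-strongly flat precover of $G$ along the inclusion $\iota : C \lrt G$. Since $G$ is $S$-weakly cotorsion, Proposition \ref{Prop. 4.3} already supplies a short exact sequence
\[ 0 \lrt K_2 \lrt F_2 \lrt G \lrt 0 \]
in $\C(\Mod R)$ with $F_2$ an $S$-strongly flat complex and $K_2$ an $S$-weakly cotorsion complex. Forming the fibre product $F_1 := F_2 \times_G C$ then produces the two short exact sequences
\[ 0 \lrt K_2 \lrt F_1 \lrt C \lrt 0 \qquad \mbox{and} \qquad 0 \lrt F_1 \lrt F_2 \lrt H \lrt 0, \]
and my candidate precover will be the projection $F_1 \lrt C$, whose kernel $K_2$ is $S$-weakly cotorsion by construction.

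Next I would verify that $F_1$ is itself an $S$-strongly flat complex. The hypothesis $\pd_R R_S \leq 1$ together with Facts 2.1(i) and (iii) forces $(\SSF, \SWC)$ to be a hereditary cotorsion pair: in any short exact sequence $0 \lrt A \lrt B \lrt D \lrt 0$ with $B, D \in \SSF$, the long exact $\Ext$-sequence applied against an arbitrary $W \in \SWC$ yields $\Ext^1_R(A, W) = 0$, since $\Ext^1_R(B, W) = 0$ and $\Ext^2_R(D, W) = 0$ (the latter by $\pd_R D \leq 1$). Applying this degreewise to $0 \lrt F_1^n \lrt F_2^n \lrt H^n \lrt 0$ places each $F_1^n$ in $\SSF$, and acyclicity of $F_1$ is immediate from the cohomology long exact sequence. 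A short diagram chase along the differentials of $F_1 \lrt F_2 \lrt H$ (using acyclicity of $F_1$ to correct representatives) then yields short exact sequences of cycles $0 \lrt \Ker \delta_{F_1}^n \lrt \Ker \delta_{F_2}^n \lrt \Ker \delta_H^n \lrt 0$, so the hereditary property once more places each $\Ker \delta_{F_1}^n$ in $\SSF$.

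Finally I would confirm that $F_1 \lrt C$ has the required precover property. Given any $S$-strongly flat complex $F'$ and any chain map $\phi : F' \lrt C$, the composition $\iota \circ \phi : F' \lrt G$ lifts along $F_2 \lrt G$ to some $\psi : F' \lrt F_2$ by the precover property granted by Proposition \ref{Prop. 4.3}. The composite $F' \lrt F_2 \lrt H$ coincides with $F' \lrt C \lrt G \lrt H$, which vanishes, so $\psi$ factors uniquely through $F_1 \cong \Ker(F_2 \lrt H)$ as a chain map $\psi' : F' \lrt F_1$; monicity of $\iota$ then makes the composite of $\psi'$ with the projection $F_1 \lrt C$ equal to $\phi$ after a two-line diagram chase. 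I expect the main technical step to be the verification that $F_1$ is an $S$-strongly flat complex, since that is where the hypothesis $\pd_R R_S \leq 1$ is really used (via the hereditary property of the cotorsion pair); once that is established, the pullback construction makes the precover property a formal consequence of the precover property of $F_2 \lrt G$.
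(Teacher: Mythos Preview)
Your proposal is correct and follows essentially the same pullback construction as the paper's proof: take an $S$-strongly flat precover $F_2\to G$ with $S$-weakly cotorsion kernel (Proposition~\ref{Prop. 4.3}), pull back along $C\hookrightarrow G$, and observe that the resulting $F_1\to C$ does the job. You have simply spelled out in detail the two points the paper leaves implicit, namely why $F_1$ is an $S$-strongly flat complex (via the hereditary property coming from $\pd_R R_S\le 1$) and why the pullback map is indeed a precover.
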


\begin{proof}
By Proposition \ref{Prop. 4.3}, for $S$-weakly cotorsion complex $G$, there exist an $S$-strongly flat precover $F\lrt G$ with kernel $S$-weakly cotorsion. Consider the pullback diagram
\[\begin{tikzcd}
 & & 0 \dar& 0\dar\\
 & & K\rar[equals]\dar& K\dar\\
 &	0 \rar  &X\rar\dar&F\rar\dar&H\dar[equals]\rar& 0\\
 &	0\rar&C\rar\dar &G\rar\dar& H\rar & 0.\\
 & & 0 & 0
	\end{tikzcd}\]
Since $F$ and $H$ are $S$-strongly flat complexes and $\pd_R R_S\leq 1$, $X$ is $S$-strongly flat. Now it is easy to see that $X\rt C$ is an $S$-strongly flat precover of $C$ where its kernel is $S$-weakly cotorsion.
\end{proof}

\begin{lemma}\label{lem. 4.5}
 Assume the projective dimension of $R_S$ as an $R$-module doesn't exceed $1$. Let  $0\lrt B\lrt C\lrt D\lrt 0$ be a short exact sequence in $\C(\Mod R)$ such that $C$ is $S$-weakly cotorsion and $D$ has an $S$-strongly flat precover with kernel $S$-weakly cotorsion. Then $B$ has an $S$-strongly flat precover with kernel $S$-weakly cotorsion.
\end{lemma}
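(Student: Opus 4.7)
The plan is to reduce the lemma to the case already handled in Lemma \ref{lem. 4.4} by a pullback construction. Concretely, let $\varphi: F\lrt D$ be an $S$-strongly flat precover of $D$ whose kernel $K$ is $S$-weakly cotorsion, so we have a short exact sequence $0\lrt K\lrt F\lrt D\lrt 0$ in $\C(\Mod R)$. I would then form the pullback of $\varphi$ along the given surjection $C\lrt D$ to obtain the diagram
\[\begin{tikzcd}
& 0\dar & 0\dar & & \\
& K\dar\rar[equals] & K\dar & & \\
0\rar & E\rar\dar & F\rar\dar{\varphi}& 0 & \\
0\rar & C\rar\dar & D\rar\dar & 0 &\\
& 0 & 0 & &
\end{tikzcd}\]
together with the short exact sequence $0\lrt B\lrt E\lrt F\lrt 0$ arising from the kernel of the vertical map $E\lrt C$ (which equals $\Ker(C\lrt D)=B$ by the pullback property).

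Next I would verify that $E$ is $S$-weakly cotorsion. This follows from the sequence $0\lrt K\lrt E\lrt C\lrt 0$: both $K$ and $C$ are $S$-weakly cotorsion complexes, and the class of $S$-weakly cotorsion complexes is closed under extensions. The latter is standard: extensions of acyclic complexes are acyclic and yield short exact sequences of cycle submodules in each degree, after which one uses that the class of $S$-weakly cotorsion $R$-modules is closed under extensions (being the right half $\SSF^{\perp_1}$ of the cotorsion pair $(\SSF,\SWC)$).

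Finally, I would apply Lemma \ref{lem. 4.4} directly to the sequence $0\lrt B\lrt E\lrt F\lrt 0$: here $F$ is $S$-strongly flat and $E$ is $S$-weakly cotorsion, so Lemma \ref{lem. 4.4} (which uses the hypothesis $\pd_R R_S\leq 1$) yields an $S$-strongly flat precover of $B$ whose kernel is $S$-weakly cotorsion, as required.

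There is no serious obstacle here; the proof is a two-line pullback followed by an invocation of the previous lemma. The only point that needs care is confirming that the class of $S$-weakly cotorsion \emph{complexes} is closed under extensions (so that $E$ inherits the property from $K$ and $C$); this is an elementary consequence of the corresponding module-level statement and the snake lemma applied to the sequence of cycles in each degree.
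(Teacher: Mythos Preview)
Your proposal is correct and follows exactly the paper's approach: form the pullback $G$ (your $E$) of $F\to D$ along $C\to D$, observe that $G$ is $S$-weakly cotorsion via the extension $0\to K\to G\to C\to 0$, and then apply Lemma~\ref{lem. 4.4} to the row $0\to B\to G\to F\to 0$. (One minor slip: the kernel of the vertical map $E\to C$ is $K$, not $B$; it is the horizontal map $E\to F$ whose kernel equals $\Ker(C\to D)=B$, and your diagram should include the left-hand $B$-column to display this row.)
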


\begin{proof}
Let $F\lrt D$ be an $S$-strongly flat precover with kernel $S$-weakly cotorsion. Consider the pullback diagram
\[\begin{tikzcd}
 & & & 0 \dar& 0\dar\\
 & & & K\rar[equals]\dar& K\dar\\
 &	0 \rar  &B\rar\dar[equals]&G\rar\dar&F\dar\rar& 0\\
 &	0\rar & B\rar &C\rar\dar& D\rar\dar & 0.\\
 & &  & 0 & 0
	\end{tikzcd}\]
The short exact sequence $0\rt K \rt  G \rt C \rt 0$ implies that $G$ is $S$-weakly cotorsion, because $K$ and $C$ are $S$-weakly cotorsion.
By applying Lemma \ref{lem. 4.4} to the exact sequence $0\rt B\rt G\rt F\rt 0$, we obtain an $S$-strongly flat precover for $B$ whose kernel is $S$-weakly cotorsion.
\end{proof}

\begin{theorem}\label{Theorem 4.6}
 Assume the projective dimension of $R_S$ as an $R$-module doesn't exceed $1$. Then any complex of $R$-modules admits an $S$-strongly flat precover.
\end{theorem}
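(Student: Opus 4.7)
\emph{Plan.} By Lemma \ref{lem. 4.2} it suffices to construct an $S$-strongly flat precover for every \emph{acyclic} complex $E$. The strategy is then to produce a short exact sequence
\[ 0 \lrt E \lrt G \lrt H \lrt 0 \qquad (\ast) \]
in $\C(\Mod R)$ in which $G$ is an $S$-weakly cotorsion complex and $H$ is an $S$-strongly flat complex (in the sense of Definition \ref{DefinitionComplexes}), and then to feed $(\ast)$ into Lemma \ref{lem. 4.4} with $C = E$; this will supply an $S$-strongly flat precover of $E$ (with $S$-weakly cotorsion kernel as a bonus). Thus the bulk of the work is to construct $(\ast)$.

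To build $(\ast)$, set $K^i = \Ker \delta^i_E$. Because $(\SSF, \SWC)$ is a complete cotorsion pair on $\Mod R$ (Remark \ref{Remark 2.2}(1)), each $K^i$ admits a special $\SWC$-preenvelope
\[ 0 \lrt K^i \lrt C^i \lrt L^i \lrt 0, \qquad C^i \in \SWC,\ L^i \in \SSF. \]
Using these together with the short exact sequences $0 \to K^i \to E^i \to K^{i+1} \to 0$ coming from the acyclicity of $E$, form the degree-wise pushouts $G^i := C^i \sqcup_{K^i} E^i$. For each $i$ there is then a short exact sequence $0 \to E^i \to G^i \to L^i \to 0$, and a horseshoe-style argument equips $\{G^i\}$ and $\{L^i\}$ with differentials so that the sequence $(\ast)$ becomes exact in $\C(\Mod R)$. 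A diagram chase shows that the kernels of the differentials of $G$ are extensions of consecutive $C^j$'s, hence lie in $\SWC$ (which is closed under extensions), and those of $H$ are extensions of consecutive $L^j$'s, hence lie in $\SSF$. So $G$ is an $S$-weakly cotorsion complex and $H$ is an $S$-strongly flat complex, as required.

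The main technical hurdle is precisely this horseshoe-style assembly: one must carefully define the differentials of $G$ and $H$ so that the three rows of the resulting diagram fit together as a genuine short exact sequence of complexes, with kernels in each degree of the prescribed type. Once $(\ast)$ is in place, Lemma \ref{lem. 4.4} applied to $E \to G \to H$ immediately delivers the desired $S$-strongly flat precover of $E$. The standing assumption $\pd_R R_S \leq 1$ is used here only through Lemma \ref{lem. 4.4} (and Lemma \ref{lem. 4.5}, via the reduction in Lemma \ref{lem. 4.2}); it plays no role in the construction of $(\ast)$ itself.
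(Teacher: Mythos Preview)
Your overall plan---reduce to acyclic complexes via Lemma~\ref{lem. 4.2}, manufacture a short exact sequence $0\to E\to G\to H\to 0$ with $G$ an $S$-weakly cotorsion complex and $H$ an $S$-strongly flat complex, then invoke Lemma~\ref{lem. 4.4}---is exactly the shape of the paper's argument (the paper phrases the last step via Lemma~\ref{lem. 4.5}, which internally performs the same pullback and then calls Lemma~\ref{lem. 4.4}). The gap is in your construction of $(\ast)$.

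With $G^i:=C^i\sqcup_{K^i}E^i$ you get $0\to C^i\to G^i\to K^{i+1}\to 0$ and $0\to E^i\to G^i\to L^i\to 0$, so $H^i=G^i/E^i=L^i$. Any differential on $G$ making $E\hookrightarrow G$ a chain map must restrict to $0$ on $K^i\subset C^i$ (since $K^i=\Ker\delta_E^i$), hence factors through $L^i$; the induced differential on $H$ is then a map $L^i\to L^{i+1}$. But the $L^i$ are \emph{fixed} once the preenvelopes $0\to K^i\to C^i\to L^i\to 0$ are chosen, and in general no differentials can render the complex $(\dots\to L^{i}\to L^{i+1}\to\dots)$ acyclic (e.g.\ if exactly one $K^i$ fails to lie in $\SWC$ then exactly one $L^i$ is nonzero). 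Concretely, with the obvious differential one finds $\Ker\delta_G^i=C^i$ while $\im\delta_G^{i-1}=K^i$, so $\mathrm{H}^i(G)\cong L^i\neq 0$; thus $G$ is not acyclic and cannot be an $S$-weakly cotorsion complex. Your description of the kernels of $G$ and $H$ as ``extensions of consecutive $C^j$'s/$L^j$'s'' does not match this $G$.

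A correct direct construction takes $G^i$ to sit in $0\to C^i\to G^i\to C^{i+1}\to 0$ compatibly with the maps $K^i\to C^i$; lifting the class of $0\to C^i\to P^i\to K^{i+1}\to 0$ (the pushout) along $K^{i+1}\hookrightarrow C^{i+1}$ requires surjectivity of $\Ext^1(C^{i+1},C^i)\to\Ext^1(K^{i+1},C^i)$, i.e.\ $\Ext^2(L^{i+1},C^i)=0$. This uses $\pd_R L^{i+1}\le 1$, hence $\pd_R R_S\le 1$. So your claim that the hypothesis plays no role in building $(\ast)$ is not correct. The paper sidesteps all of this by embedding the acyclic complex $X$ into an \emph{injective} complex $E$ (always possible), and then uses $\pd_R R_S\le 1$ only to show---via Lemma~\ref{ProjDim} and a dimension-shift as in \cite[Theorem~4.6]{EG}---that the cokernel $C=E/X$ is an $S$-weakly cotorsion complex, after which Lemma~\ref{lem. 4.5} finishes.
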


\begin{proof}
By Lemma \ref{lem. 4.2} it is enough to prove that any acyclic complex of $R$-modules has an $S$-strongly flat precover. Let $X$ be an acyclic complex. Since $\C(\Mod R)$ has enough injectives, there is a short exact sequence $0\rt X\rt E\rt C\rt 0$ of complexes, where $E$ is an injective complex and therefore is $S$-weakly cotorsion. By Lemma \ref{ProjDim}, the projective dimension of every $S$-strongly flat complex $F$ is at most $1$, so a similar argument as in the proof of
\cite[Theorem 4.6]{EG}, implies that $C$ is an $S$-weakly cotorsion complex. Now, given Lemma \ref{lem. 4.5}, we can construct an $S$-strongly flat precover for $X$.
\end{proof}

\begin{remark}
The authors in \cite{YL} proved that if $(\SX, \SY)$ is a complete and hereditary cotorsion pair in $\Mod R$, then the induced cotorsion pairs $(\widetilde{\SX}, dg \widetilde{\SY})$ and $(dg \widetilde{\SX}, \widetilde{\SY})$  are complete in $\C(\Mod R)$, where $\widetilde{\SX}$ is the class of all acyclic complexes of $R$-modules with all kernels in $\SX$ and $dg \widetilde{\SX}$ is the class of all complexes $X$ of $R$-modules such that all terms are in $\SX$ and $\homf(X, Y)$ is acyclic whenever $Y\in \widetilde{\SY}$. If we assume further that $\pd_R R_S\leq 1$, then complete cotorsion pair $(\SSF, \SWC)$ is hereditary. So by the completeness of the induced cotorsion pair $(\widetilde{\SSF}, dg\widetilde{\SWC})$ we get $\widetilde{\SSF}$ is precovering. But  $\widetilde{\SSF}$ is precisely the $S$-strongly flat complexes. This provides another proof for Theorem \ref{Theorem 4.6}.
\end{remark}

\begin{theorem}\label{Covering}
Let $R$ be a commutative ring and $S \subset R$ be a multiplicative subset.  Then the following are equivalent.
\begin{itemize}
\item[$(i)$] $R$ is $S$-almost perfect.
\item[$(ii)$] The class of $S$-strongly flat complexes is covering.
\end{itemize}
\end{theorem}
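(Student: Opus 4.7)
The plan is to prove the two implications separately, using the module-level characterization \cite[Theorem 7.9]{BP} (restated as Facts $(iv)$): $R$ is $S$-almost perfect if and only if $\SSF = \Flat R$, if and only if $\SSF$ is covering in $\Mod R$.

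For $(i) \Rightarrow (ii)$: if $R$ is $S$-almost perfect, then $\SSF = \Flat R$ and the class of $S$-strongly flat complexes (acyclic complexes all of whose kernels lie in $\SSF$) coincides with the class of flat complexes in the sense of Enochs--Garc\'{i}a Rozas \cite[Definition 2.5]{EG}. The latter is covering in $\C(\Mod R)$ by \cite{AEGO}.

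For $(ii) \Rightarrow (i)$: assume $\widetilde{\SSF}$ is covering in $\C(\Mod R)$. The strategy is to extract a module-level $\SSF$-cover from a complex-level $\widetilde{\SSF}$-cover of a stalk complex, and then conclude via \cite[Theorem 7.9]{BP}. Given $M \in \Mod R$, take an $\widetilde{\SSF}$-cover $\phi\colon F \to M[0]$ of the stalk complex, where $M[0]$ has $M$ in degree $0$ and $0$ elsewhere. Since $M[0]$ is concentrated in degree $0$, the chain map $\phi$ reduces to $\phi^0\colon F^0 \to M$ vanishing on $\Ker(\delta_F^0) = \im(\delta_F^{-1})$; by acyclicity of $F$, the isomorphism $F^0/\Ker(\delta_F^0) \cong \Ker(\delta_F^1)$ produces a morphism $\bar\phi\colon \Ker(\delta_F^1) \to M$ with $\Ker(\delta_F^1) \in \SSF$. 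To see $\bar\phi$ is an $\SSF$-precover, given $X \in \SSF$ and $g\colon X \to M$, form the $S$-strongly flat disk complex $F_X = (\cdots \to 0 \to X \xrightarrow{\id} X \to 0 \to \cdots)$ concentrated in degrees $0$ and $1$, and the induced chain map $F_X \to M[0]$; factoring it through $\phi$ and passing to the kernel of $\delta^1$ yields the desired lift $X \to \Ker(\delta_F^1)$ of $g$ through $\bar\phi$.

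The main obstacle is upgrading ``precover'' to ``cover'': given $\alpha\colon \Ker(\delta_F^1) \to \Ker(\delta_F^1)$ with $\bar\phi\,\alpha = \bar\phi$, one must show $\alpha$ is an isomorphism. The plan is to lift $\alpha$ to a chain endomorphism $\tilde\alpha\colon F \to F$ satisfying $\phi\,\tilde\alpha = \phi$; by the cover property of $\phi$ in $\C(\Mod R)$ such $\tilde\alpha$ is automatically a chain isomorphism, and the map it induces on $\Ker(\delta_F^1)$ recovers $\alpha$, forcing $\alpha$ itself to be an isomorphism. Constructing $\tilde\alpha$ degree by degree, subject to the chain condition $\delta_F^n\tilde\alpha^n = \tilde\alpha^{n+1}\delta_F^n$ and the constraint $\phi^0 \tilde\alpha^0 = \phi^0$, by means of the short exact sequences $0 \to \Ker(\delta_F^n) \to F^n \to \Ker(\delta_F^{n+1}) \to 0$ of $S$-strongly flat modules, is the technical heart of the argument. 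Once $\bar\phi$ is known to be an $\SSF$-cover of an arbitrary $M$, \cite[Theorem 7.9]{BP} yields that $R$ is $S$-almost perfect.
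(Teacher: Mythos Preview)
Your forward implication $(i)\Rightarrow(ii)$ is exactly the paper's. For $(ii)\Rightarrow(i)$ you diverge from the paper, and the divergence introduces a genuine gap.

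The problem is the lifting step you flag as the ``technical heart''. To construct $\tilde\alpha^1\colon F^1\to F^1$ restricting to $\alpha$ on $K^1:=\Ker\delta_F^1$, you must extend the composite $K^1\xrightarrow{\alpha}K^1\hookrightarrow F^1$ along the inclusion $K^1\hookrightarrow F^1$. Applying $\Hom_R(-,F^1)$ to $0\to K^1\to F^1\to K^2\to 0$ shows the obstruction lives in $\Ext^1_R(K^2,F^1)$. Here $K^2,F^1\in\SSF$, but $\SSF$ is the \emph{left} half of the cotorsion pair $(\SSF,\SWC)$: one has $\Ext^1_R(\SSF,\SWC)=0$, not $\Ext^1_R(\SSF,\SSF)=0$. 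Nothing in the hypothesis that $\phi$ is a cover supplies this vanishing, and the same obstruction recurs at every degree when you try to propagate the construction. Without the lift you cannot conclude that $\alpha$ is an isomorphism, so you obtain only an $\SSF$-precover $\bar\phi$, not a cover, and the appeal to \cite[Theorem~7.9]{BP} is blocked.

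The paper avoids this entirely by replacing the stalk $M[0]$ with the disk $\overline{M}=(\cdots\to 0\to M\xrightarrow{1}M\to 0\to\cdots)$. Starting from any $\SSF$-precover $f\colon F\to M$ (available by completeness of the cotorsion pair), one checks that $\overline{f}\colon\overline{F}\to\overline{M}$ is a precover by $S$-strongly flat complexes. By hypothesis $\overline{M}$ has a cover in this class; that cover is a direct summand of $\overline{F}$, and a direct summand of a two-term disk is again a disk $\overline{F'}$ (its differential is forced to be an isomorphism). Then $F'\to M$ is immediately seen to be an $\SSF$-cover at the module level: the passage from chain endomorphisms of $\overline{F'}$ to endomorphisms of $F'$ is the trivial functor $\overline{(-)}$, so no extension problem arises. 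The paper finishes via \cite[Lemmas~3.6 and~3.7]{BP}, though once the disk argument produces $\SSF$-covers for all $R$-modules, your intended citation of Facts~$(iv)$ would work equally well.
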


\begin{proof}
$(i)\Rightarrow (ii)$. Since $R$ is $S$-almost perfect, $\SSF=\Flat R$. Hence the class of $S$-strongly flat complexes and the class of flat complexes coincide. Now the result follows by \cite[Theorem 3.3]{AEGO}.

$(ii)\Rightarrow(i)$.  In order to show that $R$ is $S$-almost perfect, we need to show that $R_S$ and $R/sR$, for every $s\in S$, are perfect. To show this, by using \cite[Lemma 3.6, 3.7]{BP}, it suffices to show that every $R_S$-module and every $R/sR$-module, for every $s\in S$, admit $S$-strongly flat cover.
Let $M$ be an $R_S$-module. Let $f: F\lrt M$ be an $S$-strongly flat precover of $M$, where we consider $M$ as an $R$-module.   Consider the following commutative diagram
 \[\begin{tikzcd}
 &\overline{F}\dar &	\cdots\rar &0 \rar\dar  &F\rar{1}\dar{f}&F\rar\dar{f}&0\dar\rar&\cdots\\
&\overline{M} &	\cdots\rar& 0\rar&M\rar{1} &M\rar& 0\rar & \cdots.
	\end{tikzcd}\]
It is easy to see that $\overline{F}$ is an $S$-strongly flat precover of $\overline{M}$. By assumption, $\overline{M}$ has an $\SSF$-cover. Hence $\overline{F}$ is a summand of its cover. Therefore, every $\SSF$-cover of $\overline{M}$ is of the form $\overline{F}$.  This implies that $M$, as an $R$-module, admits an $\SSF$-cover. Hence \cite[Lemma 3.6]{BP} implies that $R_S$ is a perfect ring. By a similar argument, one can prove that, for every $s\in S$, every $R/sR$-module has an $S$-strongly flat cover. Hence \cite[Lemma 3.7]{BP} implies that $R/sR$ is a perfect ring. So $R$ is $S$-almost perfect.
\end{proof}

\section{Optimistically strongly flat complexes}\label{Sec: Homotopy Category of Optimistically Strongly Flat modules}
In this section we introduce the class of optimistically $S$-strongly flat modules, denoted by $\SOS$. We examine its homotopy category, $\K(\SOS)$, and explore the properties of the right adjoint of the inclusion $e: \K(\Prj R) \lrt \K(\SOS)$. Our study is motivated by Neeman's results on $\K(\Flat R)$ in \cite{N1} and \cite{N}.

\begin{definition}
 A flat $R$-module $F$ is called optimistically $S$-strongly flat if $F_S$ is a projective $R_S$-module and, for every $s\in S$, $F/sF$ is a projective $R/sR$-module.
\end{definition}

\begin{remark}
It is clear that $\Prj R\subset \SSF\subset\SOS\subset \Flat R$.  If $(R, S)$ is such that $(\OC)$ holds,  then $\SOS=\SSF$. Moreover, it follows from the definition that the class of optimistically $S$-strongly flat $R$-modules is closed under coproducts, extensions and kernels of epimorphisms.
\end{remark}

\begin{remark}
Let $R$ be a commutative ring and $r\in R$. Let $R[r^{-1}]$ be the localization of $R$ with respect to the multiplicative subset $S=\lbrace 1, r, r^2, r^3, \cdots\rbrace $ of $R$.  An $R$-module $C$ is called $r$-contraadjusted, if $\Ext^1_R(R[r^{-1}], C)=0$. An $R$-module $F$ is called $r$-very flat, if $\Ext^1_R(F, C)=0,$ for every $r$-contraadjusted module $C$, \cite[\S 1.6]{PS2}. In this case, the class of optimistically $S$-strongly flat modules coincides with the class of $r$-very flat modules, for instance, see \cite[Theorem 1.8]{PS2}.
\end{remark}

\begin{proposition}\label{Right Adjoint}
The natural inclusion  $e: \K(\Prj R)\lrt \K(\SOS)$  has a right adjoint $e^*: \K(\SOS)\lrt \K(\Prj R)$.
\end{proposition}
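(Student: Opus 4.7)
The plan is to apply Brown representability to $\K(\Prj R)$, using crucially Neeman's theorem \cite{N1} that this category is always well generated. Brown representability for well generated triangulated categories (see \cite{N2}) asserts that every cohomological functor $\K(\Prj R)^{\op}\lrt\mathrm{Ab}$ which converts coproducts to products is representable. The strategy is to verify these two hypotheses for the functor $H_Y(X):=\Hom_{\K(\Mod R)}(X,Y)$, for each fixed $Y\in\K(\SOS)$, and then read off $e^*(Y)$ as the representing object.

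The categories $\K(\Prj R)$ and $\K(\SOS)$ are both full triangulated subcategories of $\K(\Mod R)$, the latter because $\SOS$ was noted to be closed under direct sums and extensions, so degreewise mapping cones of morphisms between complexes of optimistically $S$-strongly flat modules stay in $\K(\SOS)$. In particular every triangle in $\K(\Prj R)$ is a triangle in $\K(\Mod R)$, so $H_Y$ is a cohomological functor on $\K(\Prj R)$. Moreover, coproducts in $\K(\Prj R)$ and in $\K(\SOS)$ are computed degreewise in $\K(\Mod R)$ (since both $\Prj R$ and $\SOS$ are closed under coproducts of modules), and the inclusion $e$ preserves them; hence $H_Y$ sends coproducts in $\K(\Prj R)$ to products of abelian groups.

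Brown representability then produces, for each $Y\in\K(\SOS)$, an object $e^*(Y)\in\K(\Prj R)$ together with a natural isomorphism
\[\Hom_{\K(\Prj R)}(X,e^*(Y))\;\cong\;\Hom_{\K(\Mod R)}(X,Y)\;=\;\Hom_{\K(\SOS)}(e(X),Y),\qquad X\in\K(\Prj R).\]
Uniqueness of representing objects via the Yoneda lemma makes $Y\mapsto e^*(Y)$ into a functor $e^*\colon\K(\SOS)\lrt\K(\Prj R)$, and the displayed isomorphism is precisely the required adjunction between $e$ and $e^*$. In this proof there is no serious technical obstacle: everything reduces to the deep, already-established fact that $\K(\Prj R)$ is well generated, after which the argument is a formal application of Brown representability combined with the straightforward observation that $e$ is a coproduct-preserving triangulated functor.
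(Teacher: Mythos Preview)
Your proof is correct and follows essentially the same approach as the paper: both invoke Neeman's result that $\K(\Prj R)$ is well generated, and then use Brown representability together with the fact that $e$ preserves coproducts to obtain the right adjoint. The only difference is cosmetic: the paper cites \cite[Theorem 8.4.4]{N2} (a coproduct-preserving triangulated functor out of a well generated category has a right adjoint) as a black box, whereas you have unpacked that citation by explicitly representing the functors $H_Y=\Hom_{\K(\Mod R)}(-,Y)$.
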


\begin{proof}
By \cite[Fact 2.8 ($i$)]{N1}, the category $\K(\Prj R)$ is well generated, and hence satisfies Brown representability. Moreover, the inclusion of $e$ respects coproducts. So by \cite[Theorem 8.4.4]{N2} it has a right adjoint.
\end{proof}
Consider the adjoint pair
\[ \xymatrix{\K(\Prj R)\ar@<0.5ex>[rr]^{e}& &\K(\SOS).\ar@<0.5ex>[ll]^{e^*} } \]

Note that since the inclusion $e$ is fully faithful and has a right adjoint $e^*$, the right adjoint $e^*$ is a Verdier quotient.
Consider the kernel of the functor $ e^*: \K(\SOS)\lrt \K(\Prj R)$; that is, the full subcategory of $\K(\SOS)$  that is annihilated by the functor $e^*$. Since $(e, e^*)$ is an adjoint pair,
\[\Ker e^* = \lbrace Y\in \K(\SOS) ~ ~\vert ~ \Hom(X, Y)=0~~~~ \forall X\in\K(\Prj R)\rbrace.\]

Following Neeman, we denote this subcategory by ${\K(\Prj R)}^{\perp}$, where orthogonal is taken in $\K(\SOS)$.  Let $i_*: \K(\Prj R)^{\perp} \lrt \K(\SOS)$ be the inclusion. By \cite[Remark 2.12]{N1}, the composition $\pi e: \K(\Prj R)\lrt  \frac{\K(\SOS)}{{\K(\Prj R)}^{\perp}}$ in the diagram
\[\xymatrix{
\K(\Prj R) \ar@<0.5ex>[dr]^e \ar@/_2.5pc/[ddr]^{\pi e}& & {\K(\Prj R)}^{{\perp}} \ar@<0.5ex>[dl]^{i_*}\\
& \K(\SOS) \ar@<0.5ex>[ul]^{e^*} \ar@<0.5ex>[d]^\pi\\
& \frac{\K(\SOS)}{\K(\Prj R)^\perp}}\]
is an equivalence of categories.

\begin{remark}
Let $\SX$ be an additive subcategory of $R$-modules such that it contains projective $R$-modules and it is closed under coproducts. Then by a similar argument as above, we can obtain a new description of $\K(\Prj R)$ as a quotient of $\K(\SX)$. In particular, if we set $\SX = \SSF$, then $\K(\Prj R)$ is equivalent to a quotient of $\K(\SSF)$.
\end{remark}

In the following we study the subcategory $\K(\Prj R)^{\perp}$ of $\K(\SOS)$.

\begin{theorem}\label{Theorem 8.6}
Let $Z$ be an object of $\K(\SOS)$. The following are equivalent.
\begin{itemize}
\item[$(i)$] $Z$ lies in the subcategory $\K(\Prj R)^{\perp}\subset \K(\SOS)$.
\item[$(ii)$] $Z$ is a filtered direct limit of contractible complexes of projective $R$-modules.
\item[$(iii)$] $Z$ is an acyclic complex of optimistically $S$-strongly flat $R$-modules
 \[\begin{tikzcd}
 &\cdots\rar& Z^{i-1}\rar{\delta^{i-1}}&Z^{i}\rar{\delta^i}& Z^{i+1} \rar& \cdots
	\end{tikzcd}\]
such that the kernels $K^i$ of the maps $\delta^i$ are all optimistically $S$-strongly flat $R$-modules.
\end{itemize}
\end{theorem}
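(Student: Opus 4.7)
The plan is to establish the cycle $(ii)\Rightarrow(i)\Rightarrow(iii)\Rightarrow(ii)$, closely modeled on Neeman's characterization of $\K(\Prj R)^{\perp}\subset\K(\Flat R)$ given in \cite[Facts 2.14]{N1}. The genuinely new ingredient is that the kernels of the acyclic complex should be optimistically $S$-strongly flat rather than merely flat. This strengthening will be obtained, once the flatness of kernels is in hand, by localizing at $S$ and by reducing modulo each $s\in S$, and then invoking \cite[Remark 2.15]{N1} (every acyclic complex of projective modules with flat kernels is contractible) over the rings $R_S$ and $R/sR$.

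The implication $(ii)\Rightarrow(i)$ is the easiest: by \cite[Facts 2.8$(i)$]{N1} the category $\K(\Prj R)$ is compactly generated, the compact objects being the bounded complexes of finitely generated projectives, and $\Hom_{\K}(X,-)$ from such a compact $X$ commutes with filtered direct limits. Since every contractible complex is zero in the homotopy category, $\Hom_{\K}(X,C_\lambda)=0$ for each term of the system, and hence $\Hom_{\K}(X,Z)=\lim_{\rightarrow}\Hom_{\K}(X,C_\lambda)=0$. Generation by compact objects then gives $Z\in\K(\Prj R)^{\perp}$.

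For $(i)\Rightarrow(iii)$, acyclicity of $Z$ is immediate from the identification $H^n(Z)\cong\Hom_{\K(\SOS)}(R[-n],Z)=0$, the stalk complex $R[-n]$ being an object of $\K(\Prj R)$. To establish flatness of the kernels $K^i$, I would mimic Neeman's argument inside $\K(\Flat R)$: given a finitely presented $R$-module $M$, pick a resolution by finitely generated projectives, use $Z\in\K(\Prj R)^{\perp}$ to deduce that $\Hom_R(-,Z)$ applied to this resolution is acyclic, and combine with the acyclicity of $Z$ to obtain $\Tor_1^R(M,K^i)=0$, whence $K^i$ is flat. Finally, to upgrade $K^i$ from flat to optimistically $S$-strongly flat, I would localize $Z$ at $S$: because each $Z^j_S$ is projective over $R_S$ and each short exact sequence $0\to K^i\to Z^i\to K^{i+1}\to 0$ is pure-exact (being a short exact sequence of flat modules), $Z_S$ is an acyclic complex of projective $R_S$-modules with flat kernels $(K^i)_S$, so by \cite[Remark 2.15]{N1} it is contractible and each $(K^i)_S$ is projective over $R_S$. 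Running the same argument with $-\otimes_R R/sR$ in place of $-\otimes_R R_S$ produces projectivity of each $K^i/sK^i$ over $R/sR$. Hence $K^i\in\SOS$.

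For $(iii)\Rightarrow(ii)$, which is the technical heart, the aim is to write $Z$ as a filtered direct limit of bounded contractible complexes of finitely generated projective $R$-modules. Each term $Z^i$ is flat, so by Lazard's theorem it is a filtered colimit of finitely generated projectives, but one needs this approximation to be compatible with the differentials and with the (flat) kernels. Following the construction in \cite[Section 2]{N1}, one inductively builds, over each finite window of degrees, a bounded subcomplex of finitely generated projectives admitting a compatible map into $Z$ and itself being acyclic; such a bounded acyclic complex of projectives splits term by term and is therefore automatically contractible. The filtered colimit over all finite windows reconstructs $Z$, completing the cycle. This last construction is the most delicate step of the proof; once granted, everything else reduces to combining Neeman's tools with the localization-and-quotient trick used in $(i)\Rightarrow(iii)$.
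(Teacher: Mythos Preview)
Your key new ingredient --- localizing at $S$ and reducing modulo each $s\in S$, then applying \cite[Remark 2.15]{N1} over $R_S$ and $R/sR$ to upgrade flat kernels to optimistically $S$-strongly flat ones --- is exactly what the paper does. The paper, however, traverses the cycle as $(i)\Rightarrow(ii)\Rightarrow(iii)\Rightarrow(i)$: it obtains $(i)\Rightarrow(ii)$ by directly citing \cite[Lemma 8.5]{N1} (viewing $Z$ inside $\K(\Flat R)$), places the localization trick in $(ii)\Rightarrow(iii)$, and handles $(iii)\Rightarrow(i)$ by Neeman's argument for $\K(\Flat R)$. Your $(iii)\Rightarrow(ii)$, which you call the technical heart, is essentially a reproof of that same Lemma 8.5, so the underlying external input is identical.

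There are two genuine gaps in your route. In $(ii)\Rightarrow(i)$ you claim that $\K(\Prj R)$ is compactly generated by bounded complexes of finitely generated projectives, citing \cite[Facts 2.8$(i)$]{N1}; but that reference only gives \emph{well} generation (compact generation requires $R$ coherent), and for an arbitrary $X\in\K(\Prj R)$ the functor $\Hom_{\K}(X,-)$ need not commute with filtered colimits in $\C(\Mod R)$, so vanishing on each $C_\lambda$ does not force $\Hom_{\K}(X,Z)=0$. In $(i)\Rightarrow(iii)$, the passage from acyclicity of $\homf(P_\bullet,Z)$ (for $P_\bullet$ a projective resolution of $M$) to $\Tor_1^R(M,K^i)=0$ is unjustified: a bounded-above complex of projectives is K-projective, so $\homf(P_\bullet,Z)$ is acyclic already as a consequence of $Z$ being acyclic, and this carries no information about $\Tor$. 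Both conclusions are nonetheless true and follow from Neeman's established equivalences in $\K(\Flat R)$; the paper's ordering sidesteps both issues by citing Neeman directly at the relevant points and reserving new work for the single step where the class $\SOS$ actually enters.
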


\begin{proof}
$(i) \Longrightarrow (ii)$ Let $Z\in\K(\Prj R)^{\perp}$. Since $\K(\SOS)\subset \K(\Flat R)$, $Z\in \K(\Flat R)$ and every map from an object $X\in\K(\Prj R)$ to $Z$ is null homotopic. Now \cite[Lemma 8.5]{N1} implies that the complex $Z$ can be written as a filtered colimit of contractible complexes of finitely generated projective $R$-modules.

$(ii)\Longrightarrow (iii)$ Let $Z$ be a filtered direct limit of contractible complexes of projective $R$-modules. Then the complex $Z$ is acyclic. Moreover, since for every contractible complex $(C, \delta)$ of projective $R$-modules, the sequence $0\lrt \Ker \delta^i\lrt C^i\lrt \Ker\delta^{i+1}\lrt 0$  is split exact for each $i$, $\Ker \delta^i$, for every integer $i$, is a projective $R$-module. Therefore, the kernel of $\delta^i: Z^i\lrt Z^{i+1}$, for every $i$, is a filtered direct limit of projective modules, and hence is flat. In order to show that all kernels are optimistically $S$-strongly flat $R$-modules, it remains to show that, for each $i$, $\Ker\delta^i\otimes_R R_S$ and $\Ker\delta^i\otimes_R R/sR$, for every $s\in S$, are projective as $R_S$-module and $R/sR$-module, respectively. To see this, we note that since the module $Z^i$ for each $i$, is optimistically $S$-strongly flat $R$-module, so $Z^i\otimes_R R_S$ and $Z^i\otimes_R R/sR$, for every $s\in S$, are projective. Hence we obtain an acyclic complex
\[\xymatrix{\cdots \ar[r] & Z^{i-1}\otimes_R R_S \ar[rr]^{\delta^{i-1}\otimes_R R_S} && Z^i\otimes_R R_S \ar[rr]^{\delta^i\otimes_R R_S} && Z^{i+1}\otimes_R R_S \ar[r] & \cdots} \]
of projective $R_S$-modules such that all kernels are flat $R_S$-modules and, for every $s$ in $S$, an acyclic complex
\[\xymatrix{\cdots \ar[r] & Z^{i-1}\otimes_R R/sR \ar[rr]^{\delta^{i-1}\otimes_R R/sR} && Z^i\otimes_R R/sR \ar[rr]^{\delta^i\otimes_R R/sR} && Z^{i+1}\otimes_R R/sR  \ar[r] & \cdots}\]
of projective $R/sR$-modules with all kernels flat $R/sR$-modules. Now \cite[Remark 2.15]{N1} implies that these complexes are contractible and hence, for each $i$, $\Ker\delta^i\otimes_R R_S$ and $\Ker\delta^i\otimes_R R/sR$, for every $s\in S$, are projective.

$(iii)\Longrightarrow(i)$ Since $\SOS\subset \Flat R$, so by the same argument as in the proof of $(iii) \Rightarrow (i)$ of \cite[Theorem 8.6]{N1} we get the result.
\end{proof}

\begin{remark}
The complex $Z$ in the above theorem is called an optimistically $S$-strongly flat complex, compare Definition \ref{DefinitionComplexes}.
\end{remark}

By a similar argument as in the proof of Theorem \ref{Theorem 8.6}, we get the following. So we omit the proof.

\begin{theorem}\label{Describtion as subcat of K(SSF)}
 Let $Z$ be an object of $\K(\SSF)$. The following are equivalent.

\begin{itemize}
\item[$(i)$] $Z$ lies in the subcategory $\K(\Prj R)^{\perp}\subset \K(\SSF)$.

\item[$(iii)$] $Z$ is an acyclic complex of  $S$-strongly flat $R$-modules
 \[\begin{tikzcd}
 &\cdots\rar& Z^{i-1}\rar{\delta^{i-1}}&Z^{i}\rar{\delta^i}& Z^{i+1} \rar& \cdots
	\end{tikzcd}\]
such that the kernels $K^i$ of the maps $\delta^i$ are optimistically $S$-strongly flat $R$-modules.
\end{itemize}
\end{theorem}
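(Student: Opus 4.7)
The plan is to imitate the proof of Theorem \ref{Theorem 8.6} almost verbatim, with $\K(\SOS)$ replaced by $\K(\SSF)$ throughout and the containment $\SSF \subset \SOS \subset \Flat R$ invoked at each step. A preparatory observation is that $\K(\SSF)$ is a full subcategory of $\K(\Flat R)$, so orthogonality to $\K(\Prj R)$ computed in $\K(\SSF)$ agrees with the one computed in $\K(\Flat R)$; in particular Neeman's Lemma 8.5 is applicable.

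For (i) $\Rightarrow$ (iii), I would start from $Z \in \K(\Prj R)^{\perp}\subset\K(\SSF)$ and apply \cite[Lemma 8.5]{N1} to write $Z$ as a filtered colimit of contractible complexes of finitely generated projective $R$-modules. This gives acyclicity of $Z$ at once, and presents each kernel $K^i$ as a filtered colimit of projectives, hence as a flat $R$-module. The terms $Z^i$ already lie in $\SSF$ by hypothesis. To upgrade the flat kernels to optimistically $S$-strongly flat kernels, I would tensor $Z$ with $R_S$ and with $R/sR$ for each $s\in S$. Since every $Z^i\in\SSF\subset\SOS$, both resulting complexes have projective terms (over $R_S$, respectively $R/sR$), and the flatness of the $K^i$ is preserved under these tensor products. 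Neeman's Remark 2.15 in \cite{N1} then forces these tensored complexes to be contractible, yielding projectivity of $K^i\otimes_R R_S$ and $K^i\otimes_R R/sR$. Combined with flatness of $K^i$, this is exactly the definition of $K^i\in\SOS$.

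For (iii) $\Rightarrow$ (i), the hypothesis gives that $Z$ is an acyclic complex with terms in $\SSF$ and kernels in $\SOS$, hence in particular an acyclic complex of flat modules with flat kernels. The implication (iii) $\Rightarrow$ (i) in the proof of \cite[Theorem 8.6]{N1} only uses that $Z\in \K(\Flat R)$ has flat kernels, so it applies and shows $Z\in\K(\Prj R)^{\perp}$ computed in $\K(\Flat R)$; this orthogonality restricts to the full subcategory $\K(\SSF)$.

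The argument has essentially no genuine obstacle; the whole content is a careful bookkeeping of the chain $\SSF\subset\SOS\subset\Flat R$. The one point that deserves attention is that the step (ii) $\Rightarrow$ (iii) of Theorem \ref{Theorem 8.6} uses the $\SOS$-hypothesis on the terms in order to make the tensored complexes have projective terms; here the stronger assumption that the terms are in $\SSF$ is more than enough, which is why the equivalence can legitimately bypass an explicit "(ii) = filtered colimit of contractible projective complexes" intermediate statement and still follow from Neeman's argument with no change.
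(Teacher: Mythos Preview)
Your proposal is correct and follows exactly the approach the paper intends: the paper omits the proof entirely, saying only that it follows by a similar argument as in the proof of Theorem~\ref{Theorem 8.6}, which is precisely what you have spelled out. Your observation that the intermediate filtered-colimit condition (ii) can be folded into the direct implication (i)~$\Rightarrow$~(iii) is accurate and explains why the statement's labeling skips from (i) to (iii).
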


\begin{corollary}\label{Characterization}
Let $R$ be a commutative ring and $S\subset R$ be a multiplicative subset such that $(\OC)$ holds. Let $Z$ be an object of $\K(\SSF)$. The following are equivalent.
\begin{itemize}
\item[$(i)$] $Z$ lies in the subcategory $\K(\Prj R)^{\perp}\subset \K(\SSF)$.

\item[$(ii)$] $Z$ is an $S$-strongly flat complex, that is, $Z$ is an acyclic complex of  $S$-strongly flat $R$-modules
 \[\begin{tikzcd}
 &\cdots\rar& Z^{i-1}\rar{\delta^{i-1}}&Z^{i}\rar{\delta^i}& Z^{i+1} \rar& \cdots
	\end{tikzcd}\]
such that the kernels $K^i$ of the maps $\delta^i$ are all $S$-strongly flat $R$-modules.
\end{itemize}
\end{corollary}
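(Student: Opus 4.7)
The plan is to derive the statement as a direct consequence of Theorem \ref{Describtion as subcat of K(SSF)} by invoking the hypothesis that the Optimistic Conjecture holds for the pair $(R,S)$. Under $(\OC)$, as recorded in Remark 4.2 of Section \ref{Sec: Homotopy Category of Optimistically Strongly Flat modules}, the class $\SOS$ of optimistically $S$-strongly flat $R$-modules coincides with the class $\SSF$ of $S$-strongly flat $R$-modules. Thus any occurrence of $\SOS$ in the characterization given by Theorem \ref{Describtion as subcat of K(SSF)} can be replaced by $\SSF$, and vice versa, without changing the content.

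Concretely, I would proceed as follows. Since $Z$ is by assumption an object of $\K(\SSF)$, its terms $Z^i$ are already $S$-strongly flat, so the only substantive content of condition $(ii)$ beyond that of Theorem \ref{Describtion as subcat of K(SSF)}$(iii)$ is the demand that the cycle modules $K^i=\Ker\delta^i$ be $S$-strongly flat rather than merely optimistically $S$-strongly flat. Under $(\OC)$, the equality $\SSF=\SOS$ forces these two requirements to be equivalent. Chaining with the equivalence $(i)\Leftrightarrow(iii)$ of Theorem \ref{Describtion as subcat of K(SSF)} then produces the stated equivalence $(i)\Leftrightarrow(ii)$.

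There is no real obstacle here: the entire proof consists of noting that the theorem cited provides the nontrivial input and that the hypothesis $(\OC)$ collapses the two classes of flat modules involved in its statement. Accordingly, I would present the argument in one or two sentences, referring to Theorem \ref{Describtion as subcat of K(SSF)} together with the equality $\SSF=\SOS$ guaranteed by $(\OC)$, without any additional computation.
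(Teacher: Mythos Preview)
Your proposal is correct and essentially the same as the paper's argument: both reduce the corollary to the equality $\SSF=\SOS$ under $(\OC)$ and then invoke the preceding characterization. The only cosmetic difference is that the paper cites Theorem~\ref{Theorem 8.6} (using $\SSF=\SOS$ to identify $\K(\SSF)$ with $\K(\SOS)$), whereas you cite Theorem~\ref{Describtion as subcat of K(SSF)}, which is already stated for $\K(\SSF)$; the content is identical.
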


\begin{proof}
Note that since $(\OC)$ holds, then $\SSF=\SOS$. Now the result follows by Theorem \ref{Theorem 8.6}.
\end{proof}

\begin{corollary}\label{PrecoverOverMatlisDomain}
Let $R$ be a commutative ring and $S\subset R$ be a multiplicative subset such that $(\OC)$ holds. Then every complex of $R$-modules has a $\K(\Prj R)^{\perp}$-precover.
\end{corollary}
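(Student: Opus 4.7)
The plan is to bootstrap Theorem \ref{Theorem 4.6} from the abelian category $\C(\Mod R)$ to the triangulated category $\K(\Mod R)$, using Corollary \ref{Characterization} to identify the precovering complex as an object of $\K(\Prj R)^{\perp}$.

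First I would observe that the hypothesis $(\OC)$ for the pair $(R,S)$ presupposes $\pd_R R_S\leq 1$, so Theorem \ref{Theorem 4.6} applies: for an arbitrary complex $X$ in $\C(\Mod R)$ it produces an $\SSF$-precover $f\colon F\lrt X$ in $\C(\Mod R)$ with $F$ an $S$-strongly flat complex, i.e.\ an acyclic complex of $S$-strongly flat modules all of whose kernels lie in $\SSF$. Under $(\OC)$ one has $\SSF=\SOS$, so Corollary \ref{Characterization} immediately gives $F\in\K(\Prj R)^{\perp}$.

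It then remains to verify that $f$, viewed as a morphism of $\K(\Mod R)$, is a $\K(\Prj R)^{\perp}$-precover. Given $F'\in\K(\Prj R)^{\perp}$ and any $\bar g\colon F'\lrt X$ in $\K(\Mod R)$, I would lift $\bar g$ to a chain map $g\colon F'\lrt X$. By Corollary \ref{Characterization} the complex $F'$ is itself $S$-strongly flat, so the defining property of the $\SSF$-precover $f$ in $\C(\Mod R)$ supplies a chain map $h\colon F'\lrt F$ with $g=f\circ h$. Descending to $\K(\Mod R)$ yields $\bar g=\bar f\circ\bar h$, which is the required factorization.

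I do not anticipate a real obstacle. The argument is a synthesis of Theorem \ref{Theorem 4.6}, Corollary \ref{Characterization}, and the canonical surjection $\Hom_{\C(\Mod R)}(F',-)\twoheadrightarrow\Hom_{\K(\Mod R)}(F',-)$, which is precisely what transports the precover property from the abelian setting to the homotopy category; no further triangulated-categorical input is needed. The one place where the hypothesis $(\OC)$ is essential is in applying Corollary \ref{Characterization} to both $F$ and $F'$; without it one would only obtain that $F$ lies in the orthogonal of $\K(\Prj R)$ inside $\K(\SOS)$ and that $F'$ is an optimistically $S$-strongly flat complex, which would break the matching of classes used in the factorization step.
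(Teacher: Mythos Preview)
Your proposal is correct and follows essentially the same approach as the paper: identify $\K(\Prj R)^{\perp}$ with the class of $S$-strongly flat complexes via Corollary \ref{Characterization}, then invoke Theorem \ref{Theorem 4.6}. The paper's proof is a two-line sketch that leaves implicit the passage from a precover in $\C(\Mod R)$ to one in $\K(\Mod R)$; you have simply spelled out that step explicitly, which is harmless and arguably clearer.
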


\begin{proof}
By the previous corollary, objects in $\K(\Prj R)^{\perp}$ are precisely $S$-strongly flat complexes. So the result follows by Theorem \ref{Theorem 4.6}.
\end{proof}

In the rest of this section, we study the behavior of $\K(\Prj R)^{\perp}\subset \K(\SOS)$ under tensor product.

\begin{lemma}(\cite[Proposition 9.2]{N1})\label{Prop. 9.2}
Let $Z$ be an object in $\K(\Prj R)^{\perp}\subset \K(\SOS)$ and $X$ be an arbitrary complex of $R$-modules. Then, for every finitely presented module $M$, the complex $\homf(M, X\otimes Z)$ is acyclic. In fact, the natural map of complexes \[\homf(M, X)\otimes Z\lrt \homf(M, X\otimes Z)\] is an isomorphism.
\end{lemma}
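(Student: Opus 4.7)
The plan is to reduce the statement to the case where $Z$ is a single contractible complex of projective $R$-modules, using the structural description of $\K(\Prj R)^{\perp}$ obtained in Theorem \ref{Theorem 8.6}. Concretely, by part $(ii)$ of that theorem I may write $Z=\varinjlim Z_\lambda$ as a filtered colimit (in $\C(\Mod R)$) of contractible complexes $Z_\lambda$ whose terms are finitely generated projective $R$-modules. Since $M$ is a single finitely presented module (viewed as a complex concentrated in degree $0$), in each cohomological degree the functor $\Hom_R(M,-)$ commutes with filtered colimits, and $-\otimes_R-$ of course commutes with colimits in either slot. Hence both sides of the natural map
\[
\homf(M,X)\otimes Z\lrt \homf(M,X\otimes Z)
\]
are naturally isomorphic to $\varinjlim_\lambda \homf(M,X\otimes Z_\lambda)$, provided the corresponding statement is established for each $Z_\lambda$.

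Thus the first substantive step is to verify the isomorphism when $Z_\lambda$ has projective terms. This is the classical adjunction: for $M$ finitely presented and $P$ projective one has a natural isomorphism $\Hom_R(M,A)\otimes_R P\cong \Hom_R(M,A\otimes_R P)$; assembling this over the degrees of $X$ and $Z_\lambda$, and keeping track of the Koszul signs inherited from the total tensor product and the differential on $\homf$, one obtains the desired isomorphism of complexes. I would write the map explicitly on a homogeneous element $f\otimes z$ with $f\in \Hom_R(M,X^i)$ and $z\in Z_\lambda^j$ as $m\mapsto (-1)^{|z|\cdot|m|}f(m)\otimes z$ (with the usual convention $|m|=0$), and then check commutativity with differentials; since this is purely formal degree-wise bookkeeping, no issue arises beyond sign conventions.

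Next, for acyclicity, I use that contractibility is preserved under tensoring with any complex: if $h$ is a contracting homotopy for $Z_\lambda$, then $\mathrm{id}_X\otimes h$ is a contracting homotopy for $X\otimes Z_\lambda$. Therefore $X\otimes Z_\lambda$ is contractible, and then $\homf(M, X\otimes Z_\lambda)$ is contractible, hence acyclic. Passing to the filtered colimit, and using that filtered colimits of acyclic complexes of abelian groups are acyclic, gives acyclicity of $\homf(M,X\otimes Z)$. Combined with the first step this also confirms that the natural map at the level of $Z$ is an isomorphism, yielding both assertions.

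The main obstacle I anticipate is the verification that the natural transformation $\homf(M,X)\otimes Z\to \homf(M,X\otimes Z)$ genuinely agrees, up to sign, with the map constructed pointwise at each finitely generated projective stage and commutes with differentials; this is straightforward but must be done carefully because of the Koszul signs embedded in the differentials of $\homf$ and of the total tensor product. Everything else (commuting colimits with $\Hom_R(M,-)$ for $M$ finitely presented, preservation of contractibility under $-\otimes_R -$ and $\homf(M,-)$, and exactness of filtered colimits) is standard.
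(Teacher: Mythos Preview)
Your proposal is correct and follows essentially the same route as the paper. The paper's proof simply observes that each $Z^i$ is flat and defers to Neeman's argument in \cite[Proposition 9.2]{N1}; your write-up is precisely a reconstruction of that argument, invoking Theorem \ref{Theorem 8.6} to express $Z$ as a filtered colimit of contractible complexes of finitely generated projectives and then using finite presentation of $M$ to pass $\Hom_R(M,-)$ through the colimit. The only cosmetic difference is that Neeman (and hence the paper) phrases the degreewise isomorphism $\Hom_R(M,X^i)\otimes_R Z^j\cong\Hom_R(M,X^i\otimes_R Z^j)$ via flatness of $Z^j$ directly, whereas you obtain it after reducing to finitely generated projective $Z_\lambda^j$; by Lazard's theorem these are the same computation.
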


\begin{proof}
Since for every $i$, the module $Z^i$ is a flat $R$-module, by the same argument as in the proof of \cite[Proposition 9.2]{N1} we get the result.
\end{proof}

\begin{proposition}\label{Corollary 9.4}
Let $Z$ be an object of $\K(\SOS)$. The following are equivalent.
\begin{itemize}
\item[$(i)$] $Z$ lies in the subcategory $\K(\Prj R)^{\perp}\subset \K(\SOS)$.
\item[$(ii)$] The complex $X\otimes Z$ is acyclic, for all arbitrary complex $X$ of $R$-modules.
\item[$(iii)$] The complex $M\otimes Z$ is acyclic, for all $R$-module $M$.
\item[$(iv)$]  The complex $\homf(N, Z)$ is acyclic, for all finitely presented $R$-module $N$.
\end{itemize}
\end{proposition}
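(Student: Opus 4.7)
The plan is to establish the four equivalences by going around the cycle $(i)\Rightarrow(ii)\Rightarrow(iii)\Rightarrow(iv)\Rightarrow(i)$; the first three arrows are essentially formal, and the substantive content is concentrated in $(iv)\Rightarrow(i)$.

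For $(i)\Rightarrow(ii)$, I would invoke Theorem~\ref{Theorem 8.6} to write $Z=\limt Z_\lambda$ as a filtered colimit of contractible complexes of projective $R$-modules. Since the identity of each $Z_\lambda$ is null-homotopic, so is the identity of $X\otimes Z_\lambda$, making each $X\otimes Z_\lambda$ contractible and in particular acyclic; as $X\otimes(-)$ commutes with filtered colimits and $\Mod R$ is AB5, the colimit $X\otimes Z=\limt(X\otimes Z_\lambda)$ is acyclic. The step $(ii)\Rightarrow(iii)$ is just the specialization of $X$ to $M$ viewed as a stalk complex in degree zero. For $(iii)\Rightarrow(iv)$, I would use the standard fact that for a finitely presented $N$ and a flat $F$ the canonical map $\Hom_R(N,R)\otimes_R F\lrt \Hom_R(N,F)$ is an isomorphism (five-lemma on a presentation $R^m\to R^n\to N\to 0$ using flatness of $F$); since $\SOS\subset\Flat R$, this identification is termwise and natural with respect to the differential, yielding an isomorphism of complexes $\Hom_R(N,R)\otimes_R Z\cong\homf(N,Z)$. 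Then $(iii)$ applied with $M=\Hom_R(N,R)$ forces $\homf(N,Z)$ to be acyclic.

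The core step is $(iv)\Rightarrow(i)$. Taking $N=R$ in $(iv)$ already gives $Z$ acyclic, so writing $K^i=\Ker\delta^i$ we have short exact sequences $0\lrt K^i\lrt Z^i\lrt K^{i+1}\lrt 0$. For every finitely presented $N$, the vanishing $H^{i+1}\homf(N,Z)=0$ reads, via the long exact sequence obtained by applying $\Hom(N,-)$ to this short exact sequence, as surjectivity of $\Hom(N,Z^i)\lrt\Hom(N,K^{i+1})$, i.e.\ as purity of the sequence; combined with flatness of $Z^i$ and the $\Tor$ long exact sequence, this forces each $K^{i+1}$ to be flat. Now localize at $S$: because $R_S$ is flat over $R$, the complex $Z_S$ is acyclic, its terms $Z^i_S$ are projective $R_S$-modules (as $Z^i\in\SOS$), and its kernels $K^i_S$ are flat $R_S$-modules, so Neeman's criterion \cite[Remark 2.15]{N1} applied over $R_S$ makes $Z_S$ contractible and each $K^i_S$ a projective $R_S$-module. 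For every $s\in S$, flatness of $K^{i+1}$ yields $\Tor_1^R(R/sR,K^{i+1})=0$, so $Z/sZ$ is an acyclic complex of projective $R/sR$-modules whose kernels $K^i/sK^i$ are flat; a second application of \cite[Remark 2.15]{N1} over $R/sR$ shows each $K^i/sK^i$ is projective over $R/sR$. Hence every $K^i$ lies in $\SOS$, and condition $(iii)$ of Theorem~\ref{Theorem 8.6} gives $Z\in\K(\Prj R)^\perp$.

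The principal obstacle is extracting flatness of the kernels $K^i$ from hypothesis $(iv)$; once the purity reinterpretation of $H^\ast\homf(N,Z)=0$ delivers this, the two parallel applications of \cite[Remark 2.15]{N1} over the auxiliary rings $R_S$ and $R/sR$ are routine, and Theorem~\ref{Theorem 8.6} then repackages the module-theoretic conclusions into membership in $\K(\Prj R)^\perp$.
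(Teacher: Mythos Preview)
Your proof is correct and follows essentially the same cycle $(i)\Rightarrow(ii)\Rightarrow(iii)\Rightarrow(iv)\Rightarrow(i)$ as the paper, with the same key ingredients: Theorem~\ref{Theorem 8.6} for $(i)\Rightarrow(ii)$, the isomorphism $\Hom_R(N,R)\otimes Z\cong\homf(N,Z)$ for $(iii)\Rightarrow(iv)$, and flatness of the kernels together with \cite[Remark~2.15]{N1} applied over $R_S$ and $R/sR$ for $(iv)\Rightarrow(i)$. The only difference is cosmetic: where the paper invokes Lemma~\ref{Prop. 9.2} and \cite[Lemma~9.3]{N1} as black boxes, you unpack their content directly via the finitely-presented/flat duality and the purity interpretation of $H^\ast\homf(N,Z)=0$.
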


\begin{proof}
$(i)\Longrightarrow (ii)$ Since by Theorem \ref{Theorem 8.6}, $Z$ is filtered colimit of contractible complexes $Z_\lambda$. So $X\otimes Z$ is filtered colimit of the contractible complexes $X\otimes Z_\lambda$. Hence is acyclic.

$(ii)\Longrightarrow (iii)$  is trivial.

$(iii)\Longrightarrow (iv)$ Let $N$ be a finitely presented $R$-module. By Lemma \ref{Prop. 9.2}, we have an isomorphism
\[\Hom_R(N, R)\otimes Z\st{\simeq}\lrt \homf(N,  Z).\]
By the assumption $(iii)$, $\Hom_R(N, R)\otimes Z$ is acyclic, so is $\homf(N,  Z)$.

$(iv)\Longrightarrow (i)$ Since $\SOS\subset \Flat R$ and therefore $Z$ is a complex of flat $R$-module such that for every finitely presented $R$-module $N$, $\homf(N, Z)$ is acyclic,  so \cite[Lemma 9.3]{N1} implies that $Z$ is an acyclic complex of flat $R$-modules with all kernels of maps $\delta^i: Z^i\rt Z^{i+1}$ flat. In order to complete the proof, by Theorem \ref{Theorem 8.6}, we need to show that all kernels are optimistically $S$-strongly flat $R$-modules. To show this, we just do as we did at the end of the proof $(ii)\Rightarrow (iii)$ of Theorem \ref{Theorem 8.6}.
\end{proof}

\begin{corollary}\label{Closed Under Tensor}
Let $X$ be a complex in $\K(\SOS)$.
\begin{itemize}
\item[$(i)$]
Let $Z$ be an object in $\K(\SOS)$, then  $X\otimes Z$ is in $\K(\SOS)$.
\item[$(ii)$]
Let $Z$ be in $\K(\Prj R)^{\perp}\subset \K(\SOS)$, then  $X\otimes Z$ is in $\K(\Prj R)^{\perp}$.
\end{itemize}
\end{corollary}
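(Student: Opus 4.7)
The plan is to handle the two parts in sequence, leveraging the closure properties of $\SOS$ together with Proposition \ref{Corollary 9.4}.

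For part $(i)$, I would first observe that the module in degree $n$ of $X\otimes Z$ is
\[(X\otimes Z)^n=\bigoplus_{i+j=n} X^i\otimes_R Z^j,\]
so since $\SOS$ is closed under coproducts, it suffices to check that $F\otimes_R G\in\SOS$ whenever $F,G\in\SOS$. This reduces to three verifications: flatness of $F\otimes_R G$, which is immediate from flatness of $F$ and $G$; projectivity of
\[(F\otimes_R G)_S\cong F_S\otimes_{R_S} G_S\]
over $R_S$, which follows because the tensor product of two projective modules over a commutative ring is projective; and projectivity of
\[(F\otimes_R G)/s(F\otimes_R G)\cong (F/sF)\otimes_{R/sR}(G/sG)\]
over $R/sR$ for every $s\in S$, by the same reasoning. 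Thus every term of $X\otimes Z$ lies in $\SOS$.

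For part $(ii)$, part $(i)$ already places $X\otimes Z$ in $\K(\SOS)$, so I only need to verify any one of the equivalent conditions of Proposition \ref{Corollary 9.4} for $X\otimes Z$. I would use criterion $(ii)$ of that proposition: for an arbitrary complex $Y$ of $R$-modules, the complex $Y\otimes(X\otimes Z)$ must be acyclic. By associativity of the tensor product of complexes,
\[Y\otimes(X\otimes Z)\;\cong\;(Y\otimes X)\otimes Z,\]
and since $Z\in\K(\Prj R)^{\perp}$ while $Y\otimes X$ is just some complex of $R$-modules, Proposition \ref{Corollary 9.4} applied to $Z$ yields that $(Y\otimes X)\otimes Z$ is acyclic, completing the argument.

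I do not anticipate a serious obstacle here: both statements reduce to well-known closure properties of the classes involved, and the whole burden of the deeper homotopical content has already been discharged by Theorem \ref{Theorem 8.6} and Proposition \ref{Corollary 9.4}. The only mild technical point is confirming that the associativity isomorphism for tensor products of complexes respects differentials, which is a standard sign-bookkeeping check and requires no new input.
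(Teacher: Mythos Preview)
Your proof is correct. For part~$(i)$ you actually supply more detail than the paper, which invokes only closure of $\SOS$ under coproducts and leaves implicit the verification that $X^i\otimes_R Z^j\in\SOS$; your explicit check via the isomorphisms $(F\otimes_R G)_S\cong F_S\otimes_{R_S}G_S$ and $(F\otimes_R G)/s(F\otimes_R G)\cong (F/sF)\otimes_{R/sR}(G/sG)$ fills that gap cleanly.

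For part~$(ii)$ the paper takes a different equivalent criterion from Proposition~\ref{Corollary 9.4}: it invokes Lemma~\ref{Prop. 9.2} to see that $\homf(M,X\otimes Z)$ is acyclic for every finitely presented $M$, and then applies criterion~$(iv)$. Your route through criterion~$(ii)$ and associativity of the tensor product of complexes is equally valid and arguably more direct, since it bypasses Lemma~\ref{Prop. 9.2} entirely; the paper's choice has the minor advantage of exercising the lemma it just proved. Either way the substantive work has already been done in Proposition~\ref{Corollary 9.4}, as you note.
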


\begin{proof}
$(i)$ Since the class of optimistically $S$-strongly flat $R$-modules are closed under coproducts, then by the definition of the tensor product of complexes we get the result.

$(ii)$ First we note that by $(i)$, $X\otimes Z$ is a complex of optimistically $S$-strongly flat $R$-modules. Lemma \ref{Prop. 9.2} implies that $\homf(M, X\otimes Z)$ is acyclic. Now $(iv)\Rightarrow( i)$ of Proposition \ref{Corollary 9.4} implies that $X\otimes Z$ lies in $\K(\Prj R)^{\perp}$.
\end{proof}

\begin{remark}\label{Tensor ideal}
The statement $(i)$ of the above corollary, implies that the category $\K(\SOS)$ is a tensor triangulated category and the statement $(ii)$, implies that the subcategory $\K(\Prj R)^{\perp}\subset \K(\SOS)$ is a tensor ideal, for definitions see \cite{B1}, \cite{B2}. Moreover, if $(R, S)$ is a ring such that $(\OC)$ holds, then $\K(\SSF)$ is a tensor ideal.
\end{remark}

\section{Existence of the right adjoint}\label{Sec: Some Adjoints in Homotopy Category}
In Section \ref{Sec: Homotopy Category of Optimistically Strongly Flat modules}, $\K(\Prj R)$ is described as a Verdier quotient $\K(\SOS)/{\K(\Prj R)}^{\perp}$. If $(\OC)$ holds, $\SOS =\SSF$. In this section, we show that the quotient map $e^*:\K(\SSF)\lrt \K(\SSF)/{\K(\Prj R)}^{\perp}$ has a right adjoint $e_*:\K(\SSF)/{\K(\Prj R)}^{\perp} \lrt \K(\SSF)$  provided $(\OC)$ holds. By Corollary \ref{Characterization}, in this case, ${\K(\Prj R)}^{\perp}$ is the class of all $S$-strongly flat complexes; that is, acyclic complexes whose all kernels are $S$-strongly flat $R$-modules. This class will be denoted by $\sS$.

To this end, we need to recall some known facts on the existence of adjoints that we need; for details and proofs see \cite[Theorems 9.1.13, 9.1.16 and Proposition 9.1.18]{N2}.
Recall that a full subcategory $\SU$  of a triangulated category $\ST$ is called thick if it is triangulated and is closed under direct summands.

 \begin{facts}\label{Facts}
 Let $\ST$ be a triangulated category and $\SU$ be a thick subcategory of $\ST$. Then the following are equivalent.
\begin{itemize}
\item[$(i)$] The inclusion functor $\SU\lrt \ST$ has a right adjoint.
\item[$(ii)$] For each $X\in \ST$, there exists a triangle \[X'\lrt X\lrt X''\lrt \Sigma X'\] such that $X'$ in $\SU$ and $X''$ belongs to the right orthogonal of $\SU$ in $\ST$; that is  $X''$ is an object in $\ST$ and $\Hom(Y, X'')=0$ for all $Y\in\SU$.
\item[$(iii)$] The quotient functor $\ST\lrt \ST/\SU$ has a right adjoint.
\item[$(iv)$] The composition $\SU^\perp\lrt \ST\lrt \ST/\SU$ is an equivalence.
\end{itemize}
 \end{facts}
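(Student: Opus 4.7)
The plan is to prove a cycle of implications that exhibits each of (i)--(iv) as a reformulation of the same Bousfield colocalization datum for the pair $(\SU, \SU^{\perp})$ in $\ST$; the technical heart of the argument is the computation of Hom-sets in the Verdier quotient $\ST/\SU$.

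First I would establish (i) $\Leftrightarrow$ (ii). For (i) $\Rightarrow$ (ii), let $R$ denote a right adjoint to the inclusion $i:\SU\hookrightarrow \ST$. For $X\in \ST$, complete the counit $iRX\lrt X$ to a triangle $iRX\lrt X\lrt X''\lrt \Sigma iRX$. Applying $\Hom_{\ST}(Y,-)$ with $Y\in\SU$ and using the adjunction together with fullness of $i$ gives an isomorphism $\Hom_{\ST}(Y, iRX)\cong \Hom_{\ST}(Y, X)$, so the long exact sequence forces $\Hom_{\ST}(Y, X'')=0$, i.e.\ $X''\in\SU^{\perp}$. For (ii) $\Rightarrow$ (i) I would define $R(X):=X'$ and produce functoriality by the standard lifting argument: given $f:X_1\lrt X_2$, the composite $X'_1\lrt X_1\lrt X_2$ lifts uniquely along $X'_2\lrt X_2$ because the long exact sequence from the triangle for $X_2$ applied to $\Hom_{\ST}(X'_1,-)$ has both $\Hom_{\ST}(X'_1, X''_2)$ and $\Hom_{\ST}(X'_1, \Sigma^{-1}X''_2)$ equal to zero. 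The same long exact sequence yields the adjunction isomorphism $\Hom_{\SU}(Y, R(X))\cong \Hom_{\ST}(iY, X)$.

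Next, (ii) $\Leftrightarrow$ (iv). Given a triangle as in (ii), the object $X'\in\SU$ maps to $0$ in $\ST/\SU$, so $X''\lrt X$ becomes an isomorphism there; this shows the composition $\SU^{\perp}\hookrightarrow \ST\lrt \ST/\SU$ is essentially surjective. For fully faithfulness I would use the calculus of fractions: any morphism in $\ST/\SU$ from $Y$ to $Z$ with $Z\in\SU^{\perp}$ is represented by a roof $Y \st{s}\longleftarrow W\lrt Z$ with $\cone(s)\in\SU$; the vanishings $\Hom_{\ST}(\cone(s), Z)=0$ and $\Hom_{\ST}(\Sigma^{-1}\cone(s), Z)=0$ force every such roof to be equivalent to a unique honest morphism $Y\lrt Z$, producing the key identification $\Hom_{\ST/\SU}(Y, Z)\cong \Hom_{\ST}(Y, Z)$ whenever $Z\in\SU^{\perp}$. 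Specialising to $Y, Z\in\SU^{\perp}$ gives fully faithfulness.

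Finally, (iv) $\Rightarrow$ (iii) is obtained by composing the quasi-inverse of the equivalence with the inclusion $\SU^{\perp}\hookrightarrow \ST$; the required adjunction $\Hom_{\ST/\SU}(\pi X, Z)\cong \Hom_{\ST}(X, Z)$ for $Z\in\SU^{\perp}$ is exactly the Hom-identification just established. For (iii) $\Rightarrow$ (ii) one takes a right adjoint $Q$ of the quotient $\pi$, completes the counit $Q\pi X\lrt X$ to a triangle, and checks that the cone (up to a shift) is the desired $X'\in \SU$ via $\pi X'=0$ together with the fact that, for thick $\SU$, Verdier's theorem gives $\Ker \pi = \SU$; that $Q\pi X\in \SU^{\perp}$ follows from $\Hom_{\ST}(Y, Q\pi X)\cong \Hom_{\ST/\SU}(\pi Y,\pi X)=0$ for $Y\in\SU$. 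The main obstacle throughout is the Hom-identification in the Verdier quotient, since once that is in hand everything else reduces to routine long exact sequence chases in a triangulated category.
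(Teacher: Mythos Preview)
The paper does not give its own proof here: the statement is recorded as ``Facts'' with a reference to \cite[Theorems~9.1.13, 9.1.16 and Proposition~9.1.18]{N2}, so there is nothing in the paper to compare your argument against. Your direct proof via Bousfield colocalization is the standard one and is essentially what those references contain; in particular your key step, the Hom-identification $\Hom_{\ST/\SU}(\pi Y,\pi Z)\cong\Hom_{\ST}(Y,Z)$ for $Z\in\SU^{\perp}$, is exactly Neeman's Lemma~9.1.5.

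One slip to repair. In (iii) $\Rightarrow$ (ii) you invoke ``the counit $Q\pi X\to X$'', but for $\pi\dashv Q$ the counit is $\pi Q\Rightarrow\mathrm{id}_{\ST/\SU}$ and lives in the quotient; there is no natural map $Q\pi X\to X$ in $\ST$. The map you need is the \emph{unit} $\eta_X:X\to Q\pi X$. Completing it to a triangle $X'\to X\to Q\pi X\to\Sigma X'$, the term $Q\pi X$ lies in $\SU^{\perp}$ exactly as you argue. To see $X'\in\SU$ one shows $\pi(\eta_X)$ is an isomorphism: since $Q$ always lands in $\SU^{\perp}$, your Hom-identification gives $\Hom_{\ST/\SU}(\pi Y,\pi QZ)\cong\Hom_{\ST}(Y,QZ)\cong\Hom_{\ST/\SU}(\pi Y,Z)$ for all $Y$, so the counit $\pi Q\Rightarrow\mathrm{id}$ is invertible, and then the triangle identity forces $\pi(\eta_X)$ to be its inverse. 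Hence $\pi X'=0$ and thickness of $\SU$ gives $X'\in\SU$. With this adjustment your cycle (i)$\Leftrightarrow$(ii)$\Rightarrow$(iv)$\Rightarrow$(iii)$\Rightarrow$(ii) closes correctly.
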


We also need the following fact, see \cite[Proposition 1.4]{N}.

\begin{proposition}\label{Proposition 1.4}
Let $\ST$ be a triangulated category and $\SU$ be a thick subcategory of $\ST$. Further, assume that
\begin{itemize}
\item[$(i)$] Every object in $\ST$ has an $\SU$-precover.
\item[$(ii)$] Every idempotent in $\ST$ splits.
\end{itemize}
Then the inclusion $\SU\lrt \ST$ has a right adjoint.
\end{proposition}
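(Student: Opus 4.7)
The plan is to verify condition \emph{(ii)} of the preceding Facts \ref{Facts}: for every $X \in \ST$, produce a distinguished triangle $X' \lrt X \lrt X'' \lrt \Sigma X'$ with $X' \in \SU$ and $X'' \in \SU^\perp$; this is equivalent to the desired existence of a right adjoint. Starting from an $\SU$-precover $u_0 \colon U_0 \lrt X$ provided by hypothesis (i), I complete to a triangle $U_0 \lrt X \lrt C_0 \lrt \Sigma U_0$. For any $V \in \SU$, since $\SU$ is thick and hence closed under shifts, the precover property applied to both $X$ and $\Sigma X$ yields surjections $\Hom(V, U_0) \twoheadrightarrow \Hom(V, X)$ and $\Hom(V, \Sigma U_0) \twoheadrightarrow \Hom(V, \Sigma X)$. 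The associated long exact sequence then gives
\[ \Hom(V, C_0) \;\cong\; \ker\bigl( \Hom(V, \Sigma U_0) \twoheadrightarrow \Hom(V, \Sigma X) \bigr), \]
which is in general nonzero, so a single precover is insufficient and one must refine $u_0$.

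The natural refinement proceeds as follows. Take an $\SU$-precover $U_C \lrt C_0$; rotating it yields a morphism $\Sigma^{-1} U_C \lrt U_0$ in $\SU$ whose composition with $u_0$ vanishes by the triangle axioms. Hence $u_0$ factors through $U_1 := \cone(\Sigma^{-1} U_C \lrt U_0)$, which remains in $\SU$ by thickness, producing a new $\SU$-precover $u_1 \colon U_1 \lrt X$. The octahedral axiom applied to $U_0 \lrt U_1 \lrt X$ produces a distinguished triangle $U_C \lrt C_0 \lrt C_1 \lrt \Sigma U_C$ with $C_1 = \cone(u_1)$, and because $U_C \lrt C_0$ was chosen to be an $\SU$-precover, the parallel $\Hom(V,-)$ analysis forces $\Hom(V, C_0) \lrt \Hom(V, C_1)$ to vanish and realizes $\Hom(V, C_1)$ as an injective image into $\Hom(V, \Sigma U_C)$. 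Iterating yields a tower $U_0 \lrt U_1 \lrt U_2 \lrt \cdots$ of progressively better $\SU$-approximations of $X$, in which the obstructions to lying in $\SU^\perp$ are shifted one degree higher at each step.

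The main obstacle is assembling this tower into a single approximation $X' \lrt X$ whose cone lies in $\SU^\perp$: the category $\ST$ need not admit countable homotopy colimits, so a naive direct limit is unavailable. This is precisely where hypothesis (ii), the splitting of idempotents, intervenes. The compatible factorisations in the tower determine an idempotent endomorphism on an appropriate mapping object in $\ST$, and splitting this idempotent extracts a single object $X' \in \SU$ together with a morphism $X' \lrt X$. The $\Hom(V,-)$ computations propagated along the tower then show that the cone $X''$ satisfies $\Hom(V, X'') = 0$ for every $V \in \SU$, i.e.\ $X'' \in \SU^\perp$. The resulting triangle $X' \lrt X \lrt X'' \lrt \Sigma X'$ verifies condition \emph{(ii)} of Facts \ref{Facts}, and therefore the inclusion $\SU \lrt \ST$ admits a right adjoint.
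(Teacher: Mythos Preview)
The paper does not give its own proof of this proposition: it is stated with a bare citation to Neeman \cite[Proposition 1.4]{N}. So there is no in-paper argument to compare against; the relevant benchmark is Neeman's original proof.

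Your proposal has the right opening moves but a genuine gap at the decisive point. The reduction to Facts~\ref{Facts}$(ii)$ is correct, and your analysis of the single-step cone is accurate: from a precover $u_0\colon U_0\to X$ with cone $C_0$ one only obtains $\Hom(V,C_0)\hookrightarrow\Hom(V,\Sigma U_0)$, which need not vanish. The iterated construction of $U_0\to U_1\to U_2\to\cdots$ via successive precovers of the cones and the octahedral axiom is also fine. The problem is the last paragraph. You assert that ``the compatible factorisations in the tower determine an idempotent endomorphism on an appropriate mapping object in $\ST$'', but you neither name the object nor the idempotent, and I do not see a candidate. Idempotent completeness lets you split a \emph{given} idempotent; it does not manufacture homotopy colimits of towers. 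Without countable coproducts in $\ST$ there is no Milnor $\varinjlim$ available, and nothing in the tower data singles out a finite-stage idempotent whose image is the sought $X'$. As written, hypothesis $(ii)$ is invoked as a deus ex machina at exactly the step that carries all the content.

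Neeman's actual argument does not pass through an infinite tower. The idempotent-splitting hypothesis is used at a finite stage: from a precover $u\colon U\to X$ and a map $W\to\mathrm{cone}(u)$ with $W\in\SU$, one builds (via the connecting map and a cone in $\SU$) a factorisation $u=u'\circ i$ with $i\colon U\to C$, $C\in\SU$, and then uses the precover property of $u$ again to get $r\colon C\to U$ with $u' = ur$, so that $u(ri)=u$. The role of idempotent splitting is to upgrade this to a genuine retract and thereby force the offending map $W\to\mathrm{cone}(u)$ to vanish; no transfinite assembly is needed. If you want to complete your write-up, you should replace the tower-plus-colimit paragraph by an explicit finite construction of this kind, making precise which idempotent splits and why that forces $\mathrm{cone}(u')\in\SU^{\perp}$.
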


Back to our case, let $e^*:\K(\SSF)\lrt \K(\SSF)/{\K(\Prj R)}^{\perp}$ be the Verdier quotient map. The facts above imply that in order to show the map $e^*$ has a right adjoint it is enough to show that the inclusion map $i_*: \K(\Prj R)^{\perp}\lrt \K(\SSF)$ has a right adjoint.

\begin{proposition}
Let $R$ be a commutative ring and $S\subset R$ be a multiplicative subset such that $(\OC)$ holds. Then the inclusions of $ \K(\Prj R)^{\perp}$ into $\K(\SSF)$ and $\K(\Mod R)$,  have  right adjoints.
\end{proposition}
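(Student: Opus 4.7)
The strategy is to apply Proposition \ref{Proposition 1.4} with $\SU = \K(\Prj R)^{\perp}$ and $\ST$ equal to each of $\K(\SSF)$ and $\K(\Mod R)$ in turn. Under $(\OC)$, Corollary \ref{Characterization} identifies $\SU$ with the class $\sS$ of $S$-strongly flat complexes, and this concrete description will drive the verification of the hypotheses in both cases.

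First I would establish that $\sS$ is a thick subcategory of $\K(\Mod R)$ (which will then also give thickness in $\K(\SSF)$). Closure under shifts is immediate. For closure under cones, given $f: X \lrt Y$ between objects of $\sS$, the termwise identity $\cone(f)^i = Y^i \oplus X^{i+1}$ combined with the coproduct- and extension-closure of $\SSF$ (Subsection \ref{Subsec: StronglyFlat}) places $\cone(f)$ in $\sS$: the terms are $S$-strongly flat, the complex is acyclic, and each kernel of the differential sits in a short exact sequence built from the corresponding kernels of $X$ and $Y$. Closure under summands follows analogously from summand-closure of $\SSF$.

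Next I would invoke Corollary \ref{PrecoverOverMatlisDomain}, which uses $(\OC)$ essentially, to obtain for every complex $X \in \C(\Mod R)$ an $\sS$-precover $f: Z \lrt X$ in the abelian category. This automatically descends to an $\sS$-precover in $\K(\Mod R)$: any chain map $g: Z' \lrt X$ with $Z' \in \sS$ factors as $g = fh$ on the nose, hence also modulo homotopy. The same $f$ remains an $\sS$-precover inside $\K(\SSF)$ when $X \in \K(\SSF)$, since $Z$ has $S$-strongly flat terms.

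Finally, both $\K(\Mod R)$ and $\K(\SSF)$ admit countable coproducts (the latter by coproduct-closure of $\SSF$), so idempotents split in each by the standard theorem on triangulated categories with countable coproducts. All hypotheses of Proposition \ref{Proposition 1.4} are thereby in place and yield the desired right adjoints. The step with real content is the precover construction from Section \ref{Sec: Strongly Flat Precover}; the role of $(\OC)$ is precisely to guarantee that the precovers produced there land in $\sS = \K(\Prj R)^{\perp}$, rather than merely in the a priori larger class of optimistically $S$-strongly flat complexes.
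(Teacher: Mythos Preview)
Your proposal is correct and follows essentially the same route as the paper: apply Proposition~\ref{Proposition 1.4}, obtain precovers from Corollary~\ref{PrecoverOverMatlisDomain}, and deduce idempotent splitting from the existence of coproducts. Your explicit verification of thickness is superfluous since $\K(\Prj R)^{\perp}$ is an orthogonal subcategory and hence automatically thick; also, your closing remark slightly inverts the role of $(\OC)$ --- the precovers of Theorem~\ref{Theorem 4.6} already land in $\sS$, and $(\OC)$ is needed so that $\sS$ \emph{equals} $\K(\Prj R)^{\perp}$ rather than being a potentially proper subclass --- but neither point affects the validity of the argument.
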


\begin{proof}
We use Proposition \ref{Proposition 1.4} to show the result. We need to show that the conditions of Proposition \ref{Proposition 1.4} are satisfied. Since $(\OC)$ holds, Corollary \ref{PrecoverOverMatlisDomain} implies that every object in $\K(\Mod R)$ has a $\K(\Prj R)^{\perp}$-precover and therefore every object in $\K(\SSF)\subset\K(\Mod R)$ has a  $\K(\Prj R)^{\perp}$-precover. Moreover, since the categories $\K(\Prj R)^{\perp}, \K(\SSF)$ and $\K(\Mod R)$ all have coproducts, by \cite[Proposition 1.6.8]{N2} we conclude that all idempotents split in theses categories. Hence we have the conditions of Proposition \ref{Proposition 1.4}.
So the inclusions of $ \K(\Prj R)^{\perp}$ into $\K(\SSF)$ and $\K(\Mod R)$,  have  right adjoints.
\end{proof}

The above Proposition and Facts \ref{Facts} imply the following.

\begin{theorem}\label{Existence of adjoint of quotient}
Let $R$ be a commutative ring and $S\subset R$ be a multiplicative subset such that $(\OC)$ holds. Then the functor $e^*:\K(\SSF)\lrt \K(\Prj R)$ has a right adjoint $e_*:\K(\Prj R)\lrt \K(\SSF)$.
\end{theorem}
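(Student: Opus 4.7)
The plan is to deduce this as a formal consequence of the preceding Proposition together with Facts \ref{Facts}. The preceding Proposition, assuming $(\OC)$, establishes that the inclusion $i_{\ast} \colon \K(\Prj R)^{\perp} \hookrightarrow \K(\SSF)$ admits a right adjoint. Applying the implication $(i) \Rightarrow (iii)$ of Facts \ref{Facts} to the triangulated category $\ST = \K(\SSF)$ and its thick subcategory $\SU = \K(\Prj R)^{\perp}$, the Verdier quotient functor
\[ \pi \colon \K(\SSF) \lrt \K(\SSF)/\K(\Prj R)^{\perp} \]
then automatically admits a right adjoint, which I shall denote by $R$.

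Next I would transport this right adjoint along the equivalence
\[ \pi e \colon \K(\Prj R) \st{\simeq}\lrt \K(\SSF)/\K(\Prj R)^{\perp}, \]
which is the equivalence displayed in the diagram following Proposition \ref{Right Adjoint}. That diagram is drawn there for $\K(\SOS)$, but under $(\OC)$ we have $\SOS = \SSF$, so it applies verbatim to $\K(\SSF)$. Since $e$ is fully faithful with right adjoint $e^{\ast}$, the universal property of the Verdier quotient yields a natural identification $e^{\ast} \simeq (\pi e)^{-1} \circ \pi$. Setting $e_{\ast} := R \circ (\pi e)$, the desired adjunction $(e^{\ast}, e_{\ast})$ then follows from the chain of natural isomorphisms
\[ \Hom(e^{\ast} X, Y) \cong \Hom\bigl((\pi e)^{-1} \pi X, Y\bigr) \cong \Hom\bigl(\pi X, (\pi e) Y\bigr) \cong \Hom\bigl(X, R(\pi e) Y\bigr) = \Hom(X, e_{\ast} Y), \]
valid for $X \in \K(\SSF)$ and $Y \in \K(\Prj R)$.

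I do not anticipate any serious obstacle at this point: the substantive work has already been done in establishing the preceding Proposition, which rests on Corollary \ref{PrecoverOverMatlisDomain} and hence on the precover theorem of Section \ref{Sec: Strongly Flat Precover}. The present theorem is essentially a formal transport of structure across the equivalence $\pi e$; the only minor verification is the Bousfield-type identification $e^{\ast} \simeq (\pi e)^{-1} \circ \pi$, which is immediate from the fact that $e$ is fully faithful and $\K(\Prj R)^{\perp}$ is precisely the kernel of $e^{\ast}$.
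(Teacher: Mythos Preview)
Your proposal is correct and takes essentially the same approach as the paper: the paper's proof consists of the single sentence ``The above Proposition and Facts \ref{Facts} imply the following,'' and you have simply unpacked this by applying $(i)\Rightarrow(iii)$ of Facts \ref{Facts} and transporting the resulting right adjoint across the equivalence $\pi e$. One cosmetic point: avoid naming the right adjoint $R$, since that symbol is already the ground ring.
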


\begin{remark}
Since $e^*$ is a Verdier quotient map, its right adjoint $e_*$ is fully faithful. Hence $e_*$ gives a non-obvious embedding of $\K(\Prj R)$ into $\K(\SSF)$.
\end{remark}

Using a similar argument as in the proof of \cite[Theorem 3.2]{N1}, we obtain the following theorem. Therefore, we will omit the proof.

\begin{theorem}\label{Existence of adjoint of inclusion}
 Let $R$ be a commutative ring and $S\subset R$ be a multiplicative subset such that $(\OC)$ holds. Then the inclusion functor $\K(\SSF)\lrt \K(\Mod R)$ has a right adjoint.
\end{theorem}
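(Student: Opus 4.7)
The plan is to invoke Proposition \ref{Proposition 1.4} with $\SU = \K(\SSF)$ sitting inside $\ST = \K(\Mod R)$. First I would verify that $\K(\SSF)$ is a thick subcategory: since $\SSF$ is closed under finite direct sums, mapping cones of chain maps between complexes in $\C(\SSF)$ remain in $\C(\SSF)$, and closure under retracts follows from the closure of $\SSF$ under direct summands in $\Mod R$. Since $\K(\Mod R)$ admits arbitrary coproducts, its idempotents split by \cite[Proposition 1.6.8]{N2}. Thus two of the three hypotheses of Proposition \ref{Proposition 1.4} are immediate.

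The substantive task is to produce a $\K(\SSF)$-precover for every $X \in \K(\Mod R)$. Here I would follow Neeman's strategy from \cite[Theorem 3.2]{N1}, translated to our setting. Since $(\OC)$ forces $\pd_R R_S \leq 1$, the complete cotorsion pair $(\SSF, \SWC)$ is hereditary, and by the Yang--Liu result \cite{YL} recalled in the Remark following Theorem \ref{Theorem 4.6}, it lifts to a complete induced cotorsion pair $(dg\widetilde{\SSF}, \widetilde{\SWC})$ in $\C(\Mod R)$. For a given $X$, I would take the corresponding approximation sequence
\[
0 \lrt W \lrt F \lrt X \lrt 0
\]
with $F \in dg\widetilde{\SSF} \subset \C(\SSF)$ and $W \in \widetilde{\SWC}$; that is, $W$ is acyclic with all kernels in $\SWC$. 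The chain map $F \to X$ is the candidate precover in $\K(\SSF)$. Passing to the induced triangle $W \to F \to X \to W[1]$ in $\K(\Mod R)$ and applying $\Hom_{\K(\Mod R)}(F', -)$ for $F' \in \K(\SSF)$, the precover property reduces, via the long exact sequence, to the single vanishing $\Hom_{\K(\Mod R)}(F', W[1]) = 0$, and more ambitiously to the acyclicity of the Hom-complex $\homf(F', W)$.

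The main obstacle will be this final acyclicity: $F'$ is only assumed to be a complex of $\SSF$-modules and is not a priori in $dg\widetilde{\SSF}$, so the defining property of the dg-cotorsion class does not apply. The plan is to exploit the hereditary property $\Ext^i_R(\SSF, \SWC) = 0$ for all $i \geq 1$: applying $\Hom_R(F'^p, -)$ to each short exact sequence $0 \to \Ker \delta_W^n \to W^n \to \Ker \delta_W^{n+1} \to 0$ preserves short exactness, so that each row $\Hom_R(F'^p, W^\bu)$ is acyclic; a filtration and totalization argument along the lines of Neeman's treatment of the flat case then promotes this degree-wise acyclicity to acyclicity of the full Hom-complex $\homf(F', W)$. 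With this vanishing in hand, Proposition \ref{Proposition 1.4} delivers the right adjoint of the inclusion $\K(\SSF) \lrt \K(\Mod R)$.
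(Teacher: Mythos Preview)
Your approach has a genuine gap at the final step. Acyclicity of each column $\Hom_R(F'^p, W^{\bu})$ controls only the direct-sum totalization of the associated double complex, whereas $\homf(F', W)$ is the product totalization, and for unbounded $F'$ no filtration argument bridges the two. Concretely, take $R = k[x]/(x^2)$ and let $S$ be the set of units, so that $R_S = R$, $(\OC)$ holds trivially, $\SSF = \Prj R$ and $\SWC = \Mod R$. The acyclic complex $F' = W = (\cdots \to R \xrightarrow{x} R \xrightarrow{x} R \to \cdots)$ then has $F' \in \K(\SSF)$ and $W \in \widetilde{\SWC}$; every $\Hom_R(R, W^{\bu}) \cong W$ is acyclic, yet the identity $F' \to W$ is not null-homotopic. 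In this example your $dg\widetilde{\SSF}$-approximation of $X = F'$ has contractible source (since $X$ already lies in $\widetilde{\SWC}$, the middle term lands in $dg\widetilde{\SSF} \cap \widetilde{\SWC}$ and is therefore zero in $\K$), so the resulting map is visibly not a $\K(\SSF)$-precover of the nonzero object $X \in \K(\SSF)$. Thus $\C(\SSF) \neq dg\widetilde{\SSF}$ in general, and your reduction collapses.

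The argument that does go through, in the spirit of \cite[Theorem~3.2]{N1}, exploits the semi-orthogonal decomposition coming from Proposition~\ref{Right Adjoint} and Corollary~\ref{Characterization}: every object of $\K(\SSF)$ sits in a triangle $P \to U \to S'$ with $P \in \K(\Prj R)$ and $S' \in \sS$, and $\Hom(\K(\Prj R), \sS) = 0$. The inclusion $\K(\Prj R) \hookrightarrow \K(\Mod R)$ has a right adjoint by well-generation and Brown representability, while $\sS \hookrightarrow \K(\Mod R)$ has a right adjoint by the proposition immediately preceding Theorem~\ref{Existence of adjoint of quotient}. Given $X \in \K(\Mod R)$, take triangles $P \to X \to Y$ with $P \in \K(\Prj R)$, $Y \in \K(\Prj R)^{\perp}$, and then $S' \to Y \to Z$ with $S' \in \sS$, $Z \in \sS^{\perp}$. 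The octahedral axiom applied to $X \to Y \to Z$ produces a triangle $U \to X \to Z$ with $U$ sitting in a triangle $P \to U \to S'$, hence $U \in \K(\SSF)$; and the long exact sequence together with $\Hom(\K(\Prj R), \sS) = 0$ shows $Z \in \K(\Prj R)^{\perp} \cap \sS^{\perp} = \K(\SSF)^{\perp}$. Facts~\ref{Facts}$(ii)$ then yields the desired right adjoint.
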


Let $R$ be a commutative ring. Neeman in \cite{N1}, in order to prove that an object in $\K(\Flat R)$ has a flat precover, considered an auxiliary ring
$T=T(R)$ and two functors $\inc:\C(\Mod R)\lrt \Mod T$  and $C: \Mod T\lrt \C(\Mod R)$. Although we do not use Neeman's method in our setting, studying the behavior of strongly flat complexes and optimistically strongly flat complexes under these two functors are interesting. In the following we do this. First, let us recall the ring $T$ and two functors $\inc$ and $C$, briefly. Let
\[\begin{tikzcd}
 & \cdots \ar{r}{\delta^{i-2}} &\cdot^{i-1}
\ar{r}{\delta^{i-1}}& \cdot^i\ar{r}{\delta^{i}}& \cdot^{i+1}\ar{r}{\delta^{i+1}}& \cdot^{i+2}\ar{r}{\delta^{i+2}}& \cdots
	\end{tikzcd}\]
be the quiver with relations $\delta^{i+1}\delta^i=0$, for all $i \in \Z$. Let $T$ be the free $R$-module with basis $\lbrace 1, \delta^i, e^j\rbrace$, with $i, j\in\mathbb{Z}$. Since $R$ commutes with all the basis elements, it is an algebra.
 The relations between the basis elements assert that $1$ is the identity element and the set $\{e^j\}_{j \in \Z}$ are orthogonal idempotents.
Then he defined the inclusion functor
\[\inc: \C(\Mod R)\lrt \Mod T\]
such that takes a complex $Z$ to $\bigoplus_{i=-\infty}^{\infty} Z^i$ with a natural $T$-module structure. This functor is fully faithful and has a right adjoint, which is denoted  by
\[C: \Mod T\lrt \C(\Mod R).\]
The functor $C$ takes the $T$-module $M$ to the complex
\[\begin{tikzcd}
 &\cdots\rar{\delta^{i-2}}& e^{i-1}M\rar{\delta^{i-1}}&e^{i}M\rar{\delta^i}& e^{i+1}M \rar{\delta^{i+1}}& \cdots.
	\end{tikzcd}\]

In the following we collect some facts about the functors $\inc$ and $C$, all from \cite[\S 2]{N1}.
\begin{facts}
The following hold true.
\begin{itemize}
\item[$(i)$] The functor $C$ is exact and preserves colimits.
\item[$(ii)$] If $P$ is a projective $T$-module, then the complex $C(P)$ is a contractible complex of projective $R$-modules.
\item[$(iii)$] If $F$ is a flat $T$-module, then  the complex $C(F)$ is a flat complex of $R$-modules.
\item[$(iv)$] If $Z$ is a contractible complex of projective $R$-modules, then $\inc(Z)$ is a projective $T$-module.
\item[$(v)$] If $Z$ is a flat complex, then $\inc(Z)$ is a flat $T$-module.
\end{itemize}
\end{facts}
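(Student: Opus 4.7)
The plan is to verify each clause by exploiting the idempotent structure of $T$, treating $T$ essentially as a variant of the path algebra of the two-sided $A_\infty$ quiver with the relations $\delta^{i+1}\delta^i = 0$. Two workhorses will carry the argument: each idempotent $e^j \in T$ yields a direct-summand functor $M \mapsto e^j M$ on $\Mod T$, and the shifted disk complex $D^i(P) = (\cdots \to 0 \to P \st{\id}{\lrt} P \to 0 \to \cdots)$ concentrated in degrees $i-1, i$ is sent by $\inc$ to a projective $T$-module. All five assertions are proved in \cite[\S 2]{N1}; the sketch below just records how those arguments run.

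For (i), since $C(M)^j = e^j M$ and $e^j M$ is a direct summand of $M$ as abelian groups, the functor $M \mapsto e^j M$ is exact and preserves all colimits; assembling this degreewise gives exactness of $C$ and preservation of colimits. For (ii) and (iv), I would first identify $\inc(D^i(P))$ concretely and verify that it is projective over $T$; since $\inc$ preserves coproducts (as a left adjoint) and every contractible complex of projective $R$-modules decomposes as a coproduct of shifted disks $D^i(P)$ with $P$ projective, this establishes (iv). Dually, via the adjunction every projective $T$-module arises as a summand of a coproduct of such $\inc(D^i(P))$, and $C$ sends it to a coproduct of disks by (i), which gives (ii).

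Finally, (iii) and (v) follow by Lazard's theorem. For (iii), write a flat $T$-module $F$ as a filtered colimit of finitely generated projective $T$-modules $P_\alpha$; then by (i), $C(F) = \varinjlim C(P_\alpha)$ is a filtered colimit of contractible complexes of projective $R$-modules, hence a flat complex. For (v), the key technical input (essentially \cite[Lemma 8.5]{N1}) is that any flat complex of $R$-modules is itself a filtered colimit of contractible complexes of finitely generated projectives; applying $\inc$ together with (iv) then exhibits $\inc(Z)$ as a filtered colimit of projective $T$-modules, hence flat. The main subtlety requiring care is the precise identification of the essential image of $\inc$ inside $\Mod T$ and the verification that filtered colimits in $\C(\Mod R)$ correspond to filtered colimits in $\Mod T$ under $\inc$; once this bookkeeping is in place, everything else reduces to idempotent calculus.
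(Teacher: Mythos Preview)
Your sketch is correct and in fact goes further than the paper does: the paper offers no proof of these facts at all, merely recording them as imported from \cite[\S 2]{N1}. Your outline accurately reconstructs Neeman's arguments---the idempotent calculus for (i), the disk-complex/projective correspondence for (ii) and (iv), and the Lazard-plus-\cite[Lemma 8.5]{N1} reduction for (iii) and (v)---so there is nothing to compare beyond noting that you have supplied what the paper deliberately omits.
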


\begin{remark}
Let $\sS$ be the class of $S$-strongly flat complexes, which are defined as acyclic complexes where all kernels are $S$-strongly flat $R$-modules. We set $\inc(\sS) = \SF$. Since every $S$-strongly flat complex is also a flat complex, it follows that $\SF \subset \Flat T$. Furthermore, because every contractible complex of projectives is an $S$-strongly flat complex, we can conclude that $\Prj T \subset \SF$.
\end{remark}

We show that the class $\SF\subset \Mod T$ is a precovering class provided $\pd_R R_S\leq 1$.

\begin{proposition}
Let $R$ be a commutative ring and $S\subset R$ be a multiplicative subset such that projective dimension of $R_S$ as an $R$-module doesn't exceed $1$. Then every object in the category of $T$-modules has an $\SF$-precover.
\end{proposition}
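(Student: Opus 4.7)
The plan is to push an $\sS$-precover of a complex across the adjunction $(\inc, C)$ to obtain an $\SF$-precover of a $T$-module. Recall that $\inc$ is left adjoint to $C$, so for every complex $X$ and every $T$-module $M$ there is a natural isomorphism
\[\Phi \colon \Hom_T(\inc(X), M) \cong \Hom_{\C(\Mod R)}(X, C(M)),\]
and that by definition $\SF = \inc(\sS)$.

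Given $M \in \Mod T$, I would first form the complex $C(M)$. Since the hypothesis $\pd_R R_S \leq 1$ is in force, Theorem \ref{Theorem 4.6} supplies an $\sS$-precover $f \colon F \lrt C(M)$ with $F \in \sS$. Set $\tilde{f} := \Phi^{-1}(f) \colon \inc(F) \lrt M$; note that $\inc(F) \in \SF$ tautologically, so it remains only to verify the precover property. To this end, take any $Y \in \SF$ together with a $T$-homomorphism $g \colon Y \lrt M$. Write $Y = \inc(X)$ for some $X \in \sS$ and consider the transpose $\tilde{g} = \Phi(g) \colon X \lrt C(M)$. Since $f$ is an $\sS$-precover of $C(M)$, there exists $h \colon X \lrt F$ with $\tilde{g} = f \circ h$. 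A short diagram chase based on the naturality of the unit $\eta \colon 1 \Rightarrow C \circ \inc$ then yields $\Phi(\tilde{f} \circ \inc(h)) = f \circ h = \Phi(g)$, and since $\Phi$ is a bijection, $\tilde{f} \circ \inc(h) = g$, giving the required factorization.

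The argument is essentially formal: the substantive input is Theorem \ref{Theorem 4.6}, where the hypothesis $\pd_R R_S \leq 1$ is really used. The main point to be careful with is the final naturality computation, which is what ensures that an $\sS$-precover on the complex side actually translates into an $\SF$-precover on the $T$-module side; notably, one does not need $C$ to preserve the subclass $\sS$, nor any structural property of $\SF$ beyond the tautological equation $\SF = \inc(\sS)$, so I do not expect any serious obstacle.
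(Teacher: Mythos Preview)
Your proposal is correct and follows essentially the same approach as the paper: both push an $\sS$-precover of $C(M)$ through the adjunction $(\inc, C)$ to obtain an $\SF$-precover of $M$. The only cosmetic difference is that you phrase the argument via the hom-set bijection $\Phi$ and its naturality, whereas the paper writes out the counit $\inc(C(M))\to M$ and the unit isomorphism $F'\cong C(\inc(F'))$ explicitly; these are equivalent formulations of the same adjunction manipulation.
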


\begin{proof}
Let $M$ be a $T$-module. Then $C(M)$ is in $C(\Mod R)$. By Theorem \ref{Theorem 4.6} we know that $C(M)$ has an $S$-strongly flat precover $F$. Let $F\lrt C(M)$ be an $S$-strongly flat precover. By applying the functor $\inc$ we get $\inc(F)\lrt \inc(C(M))$. But we know that there is a map from $\inc(C(M))$ to $M$, so we get a map from  $\inc(F)$ to $M$. We claim that the map $\inc(F)\lrt M$ is an $\SF$-precover. First we note that $\inc(F)$ is in the class $\SF$.  We need to prove it a precover.

Assume that we are given a map $\inc(F') \lrt M$, where $F'$ is in $\sS$. So we have a map $F'\cong C (\inc(F'))\lrt C(M)$. It must factor through the $S$-strongly flat precover $F\lrt C(M)$. Therefore, we have a factorization
\[ \begin{tikzcd}
 & F'\rar & F\rar& C(M).
	\end{tikzcd}\]
Hence we have a factorization
\[ \begin{tikzcd}
 & \inc(F')\rar & \inc(F)\rar& \inc(C(M))\rar& M.
	\end{tikzcd}\]
\end{proof}

In the following we investigate the behaviour of the class of optimistically $S$-strongly flat complexes, denoted by $\SO$, and $\CF:= \inc(\SO)$, under the functors $\inc$  and $C$, respectively.

\begin{proposition}
Let $\SO$ denote the class of optimistically $S$-strongly flat complexes. Set $\CF=\inc(\SO)$.
\begin{itemize}
  \item [$(1)$] Let $Z \in \SO$. Then $\inc(Z) \in \Flat T \cap (\SOS).$
  \item [$(2)$] Let $F\in\CF$. Then $C(F)$ is an acyclic complex with all kernels optimistically $S$-strongly flat $R$-modules.
\end{itemize}
\end{proposition}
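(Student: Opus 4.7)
The plan is to handle the two parts separately, with each reducing to the invocation of facts already recorded above about $\inc$, $C$, and the adjunction $(\inc,C)$.

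For part $(1)$, I would first unpack what it means for $Z$ to lie in $\SO$. By definition, $Z$ is acyclic with every kernel $K^i$ in $\SOS$. Since $\SOS$ is closed under extensions, the short exact sequences $0\to K^i\to Z^i\to K^{i+1}\to 0$ force each term $Z^i$ to lie in $\SOS\subseteq\Flat R$ as well. Consequently $Z$ is a flat complex in the sense of \cite{EG}, and Fact $(v)$ delivers immediately that $\inc(Z)$ is a flat $T$-module. For the remaining $\SOS$-membership, the underlying $R$-module of $\inc(Z)$ is $\bigoplus_{i\in\mathbb{Z}}Z^i$, so I would verify that $\SOS$ is closed under arbitrary coproducts: flatness passes to coproducts, the functors $(-)\otimes_R R_S$ and $(-)\otimes_R R/sR$ commute with direct sums, and arbitrary direct sums of projective modules (over $R_S$, resp.\ $R/sR$) remain projective. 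Hence $\inc(Z)\in\Flat T\cap(\SOS)$.

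Part $(2)$ is essentially formal. Because $\inc$ is fully faithful with right adjoint $C$, the unit $\eta:\id_{\C(\Mod R)}\Longrightarrow C\circ\inc$ of the adjunction is a natural isomorphism. Hence, writing $F=\inc(Z)$ for some $Z\in\SO$, I would conclude $C(F)=C(\inc(Z))\cong Z$ in $\C(\Mod R)$. Since $Z$ is by hypothesis an acyclic complex whose every kernel lies in $\SOS$, the same holds of the isomorphic complex $C(F)$, which is the conclusion sought.

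The only step worth calling an obstacle is the coproduct-closure of $\SOS$ used in part $(1)$, and even that is immediate from standard properties of projective modules and the compatibility of the tensor product with direct sums. Beyond that, both parts are direct applications of already established material, namely the preservation properties of $\inc$ recorded in the facts following the construction of $T$, and the standard fact that a fully faithful left adjoint has invertible unit.
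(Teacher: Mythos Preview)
Your proof is correct. Part $(1)$ matches the paper's argument essentially verbatim. Part $(2)$, however, takes a genuinely different and shorter route: you exploit that $\CF=\inc(\SO)$ literally by definition, write $F=\inc(Z)$, and invoke the natural isomorphism $C\circ\inc\cong\id$ coming from the fully faithfulness of the left adjoint $\inc$. The paper instead argues directly from the structure of $C(F)$: using part $(1)$ it knows $F$ is a flat $T$-module and an optimistically $S$-strongly flat $R$-module; Fact $(iii)$ then gives that $C(F)$ is a flat complex, the idempotent decomposition $F=e^iF\oplus(1-e^i)F$ shows each term $e^iF$ lies in $\SOS$, and finally Neeman's contractibility result \cite[Remark 2.15]{N1} (applied after tensoring with $R_S$ and $R/sR$) forces the kernels into $\SOS$. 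Your argument is cleaner for the statement as written; the paper's argument, though longer, actually establishes the stronger assertion that $C$ carries all of $\Flat T\cap(\SOS)$ into $\SO$, not merely the image $\inc(\SO)$.
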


\begin{proof}
$(1).$ Let $Z$ be an optimistically $S$-strongly flat complex, that is, $Z$ is an acyclic complex such that all kernels are optimistically $S$-strongly flat.
Since $Z$ is a flat complex, that is an acyclic complex with kernels flat, then Fact \ref{Facts}$(v)$ implies that $\inc(Z)$ is a flat $T$-module. Moreover, since the class of optimistically $S$-strongly flat $R$-modules is closed under coproducts, by the definition of the functor $\inc$ we get $\inc(Z)$ is an optimistically $S$-strongly flat $R$-module. Hence the result follows.

$(2).$ Let $F\in\CF$. Then $C(F)$ is the complex
\[\begin{tikzcd}
 &\cdots\rar& e^{i-1}F\rar{\delta^{i-1}}&e^{i}F\rar{\delta^i}& e^{i+1}F \rar& \cdots.
	\end{tikzcd}\]
Since $F$ is a flat $T$-module, Fact \ref{Facts}$(iii)$ implies that $C(F)$ is a flat complex of $R$-modules, that is an acyclic complex with all kernels flat $R$-modules. Further, since $F$ is an optimistically $S$-strongly flat $R$-module, decomposition $F=e^i F\oplus (1-e^i)F$ implies that $e^i F\otimes_R R_S$ and $e^i F\otimes_R R/sR$, for every $s\in S$, are projective $R$-modules. Therefore, all terms in the acyclic complex $C(F)$ are optimistically $S$-strongly flat $R$-modules and all kernels are flat $R$-modules. Now \cite[Remark 2.15]{N1} tells us that all kernels are optimistically $S$-strongly flat $R$-modules. Hence $C(F)$ is an acyclic complex with all kernels optimistically $S$-strongly flat $R$-modules, that is, $C(F)\in\SO$.
\end{proof}

\section{Almost well generated triangulated category}\label{Sec: Almost Well Generated}
In this section, we define and study the notion of $S$-almost well generated triangulated categories. We show that if $R$ is an $S$-almost perfect ring, then $\K(\Flat R)$ is $S$-almost well generated. We prove the converse in special cases. We do not know if it holds in general.

Let $\ST$ be a triangulated category with coproducts. Recall that $\ST$ is well generated if it has an $\alpha$-perfect set of $\alpha$-small generators, for
some regular cardinal $\alpha$. Such a set is called an $\alpha$-compact generating set of $\ST$ \cite{N1}. If $\alpha = \aleph_0$, then $\ST$ is called compactly generated. Neeman in \cite{N1} proved that $\K(\Prj R)$ is always well generated.

For a subcategory $\SX$ of $R$-modules, we let $\Add(\SX)$ be the class of all direct summands of arbitrary direct sums of objects from $\SX$.

\begin{definition}
Let $\SX$ be an additive full subcategory of $R$-modules such that it is closed under coproducts. We say that $\K(\SX)$, the homotopy category of $\SX$,  is  $S$-almost well generated if $\K(\Add(\SX\otimes_R R_S))$ and $\K(\Add(\SX\otimes_R R/sR))$, for every $s\in S$, are well generated.
\end{definition}

\begin{example}\label{Examples}
Let $R$ be a commutative ring and $S \subset R$ be a multiplicative subset.
\begin{itemize}
\item[$(i)$] Assume  $\SX=\Prj R$, then $\K(\Prj R)$ is $S$-almost well generated. We have
\[\Add(\Prj R\otimes_R R_S)=\Prj R_S,\]
and for every $s\in S$,
\[\Add(\Prj R\otimes_R R/sR)=\Prj R/sR.\]

Now  since $\K(\Prj R_S)$ and $\K(\Prj R/sR)$ are well generated, so $\K(\Prj R)$ is $S$-almost well generated.

\item[$(ii)$] Assume $\SX=\SSF$,  then $\K(\SSF)$ is $S$-almost well generated. To see this, by \cite[Lemma 3.1]{BP}  and the fact that $\Prj R\subset \SSF$ we have inclusions  \[\Prj R\otimes_R R_S\subset\SSF\otimes_R R_S\subset \Prj R_S,\] and for every $s\in S$, \[\Prj R\otimes_R R/sR\subset \SSF\otimes_R  R/sR\subset\Prj R/sR.\]
Now $(i)$ tells us that $\Add(\SSF\otimes_R R_S)=\Prj R_S$, and for every $s\in S$, $\Add(\SSF\otimes_R R/sR)=\Prj R/sR$ and hence the homotopy categories of these subcategories are well generated.

\item[$(iii)$] Assume $\SX=\SOS$, then $\K(\SOS)$ is $S$-almost well generated. We note that by the definition of optimistically $S$-strongly flat  modules we have $\SOS\otimes_R R_S\subset \Prj R_S$ and for every $s\in S$,  $\SOS\otimes_R R/sR\subset \Prj R/sR$. Moreover, since  $\SSF\subset \SOS$ there are inclusions
\[ \SSF\otimes_R R_S\subset \SOS\otimes_R R_S\subset \Prj R_S,\]
and for every $s\in S$,
\[\SSF\otimes_R R/sR \subset \SOS\otimes_R R/sR\subset \Prj R/sR.\]
Now $(ii)$ implies that $\Add(\SOS\otimes_R R_S)=\Prj R_S$, and for every $s\in S$, \[\Add(\SOS\otimes_R R/sR)=\Prj R/sR.\] Hence the homotopy categories of these subcategories are well generated.
\end{itemize}
\end{example}

\begin{theorem}\label{S-Almost WG}
Let $R$ be a commutative ring and $S \subset R$ be a multiplicative subset. Consider the following statements.
\begin{itemize}
\item[$(i)$]  $R$ is $S$-almost perfect.
\item[$(ii)$] $\K(\Flat R)=\K(\SSF)$.
\item[$(iii)$]  $\K(\Flat R)$ is $S$-almost well generated.
\end{itemize}
Then $(i)$ is equivalent to $(ii)$ and $(ii)$ implies $(iii)$.
\end{theorem}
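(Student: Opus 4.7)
The plan is to dispatch the three implications in turn; the core of each argument reduces to Facts~2.1$(iv)$ and the explicit computation carried out in Example~\ref{Examples}.

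For $(i) \Rightarrow (ii)$, I would appeal directly to Facts~2.1$(iv)$: if $R$ is $S$-almost perfect then every flat $R$-module is $S$-strongly flat, so $\Flat R$ and $\SSF$ coincide as classes of modules, and hence $\K(\Flat R) = \K(\SSF)$ as subcategories of $\K(\Mod R)$.

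For $(ii) \Rightarrow (i)$, let $M$ be an arbitrary flat $R$-module and view it as a stalk complex $M[0]$ concentrated in degree zero. Then $M[0]$ lies in $\K(\Flat R) = \K(\SSF)$, and I would read off $S$-strong flatness of $M$ as follows. Under the direct reading in which $\K(\SSF)$ is the full subcategory of $\K(\Mod R)$ whose objects are complexes with all terms in $\SSF$, the inclusion $M[0] \in \K(\SSF)$ forces $M \in \SSF$ immediately. Under the reading up to homotopy equivalence, $M[0]$ is homotopy equivalent to some $F^\bullet$ with $F^i \in \SSF$ for every $i$; restricting the witnessing chain maps to degree zero produces $f : M \lrt F^0$ and $g : F^0 \lrt M$ with $g f = 1_M$ on the nose (any homotopy into or out of the stalk complex is forced to vanish), which exhibits $M$ as a direct summand of the $S$-strongly flat module $F^0$, so that Facts~2.1$(i)$ yields $M \in \SSF$. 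Either way, every flat $R$-module is $S$-strongly flat, and Facts~2.1$(iv)$ returns $(i)$.

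For $(ii) \Rightarrow (iii)$, I would combine the equivalence just proved with Example~\ref{Examples}. Under $(ii)$ the classes $\Flat R$ and $\SSF$ coincide, so \[\Add(\Flat R \otimes_R R_S) = \Add(\SSF \otimes_R R_S) \quad \text{and} \quad \Add(\Flat R \otimes_R R/sR) = \Add(\SSF \otimes_R R/sR)\] for every $s \in S$. By Example~\ref{Examples}$(ii)$ the right-hand sides are $\Prj R_S$ and $\Prj R/sR$ respectively, and $\K(\Prj R_S)$, $\K(\Prj R/sR)$ are well generated by Neeman's theorem. Hence $\K(\Flat R)$ is $S$-almost well generated by definition. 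The argument overall is essentially bookkeeping, and I do not foresee a serious obstacle; the only point requiring a moment's attention is the precise interpretation of the equality in $(ii)$, which the direct-summand argument in $(ii) \Rightarrow (i)$ is designed to handle uniformly in both natural readings.
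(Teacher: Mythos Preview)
Your proof is correct and follows essentially the same approach as the paper: both rely on Facts~2.1$(iv)$ for the equivalence $(i)\Leftrightarrow(ii)$ and on Example~\ref{Examples}$(ii)$ for $(ii)\Rightarrow(iii)$. Your treatment of $(ii)\Rightarrow(i)$ is more careful than the paper's, which simply records the chain ``$R$ is $S$-almost perfect $\Leftrightarrow$ $\Flat R=\SSF$ $\Leftrightarrow$ $\K(\Flat R)=\K(\SSF)$'' without justifying the rightmost step; your direct-summand argument fills exactly that gap.
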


\begin{proof}
$(i)\Longleftrightarrow (ii)$. Ring $R$ is $S$-almost perfect if and only if $\Flat R=\SSF$ if and only if $\K(\Flat R)=\K(\SSF)$.

$(ii)\Longrightarrow (iii)$. Since by Example \ref{Examples} $(ii)$,  $\K(\SSF)$ is $S$-almost well generated, so is $\K(\Flat R)$.
\end{proof}

In the following we investigate special rings such that $\K(\Flat R)$ being $S$-almost well generated implies that $R$ is $S$-almost perfect.

\begin{lemma}\label{FlatIsoFlatR_SFlatR/sR}
Let $R$ be a commutative ring and $S \subset R$ be a multiplicative subset.
\begin{itemize}
\item[$(i)$] $\K(\Flat R\otimes_R R_S) = \K(\Flat R_S)$.
\item[$(ii)$] Assume that for every $s\in S$, the ring $R/sR$ is perfect. Then\[\K(\Add(\Flat R \otimes_R R/sR))=\K(\Flat R/sR).\]
\end{itemize}
\end{lemma}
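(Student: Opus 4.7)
The plan is to establish both equalities at the level of the underlying classes of modules; the corresponding equalities of homotopy categories are then automatic. In each case the work is a routine base-change manipulation combined, in $(ii)$, with the perfectness hypothesis.

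For $(i)$, I would prove the set-theoretic identity $\Flat R \otimes_R R_S = \Flat R_S$. The inclusion $\subseteq$ is the standard base-change fact: for $F \in \Flat R$ and any $R_S$-module $N$, the natural isomorphism $(F \otimes_R R_S) \otimes_{R_S} N \cong F \otimes_R N$ is exact in $N$, so $F \otimes_R R_S$ is $R_S$-flat. For the reverse inclusion, any $M \in \Flat R_S$ is flat over $R$ (flatness transfers along the flat $R$-algebra $R \to R_S$, since if $A \hookrightarrow B$ is injective then $A \otimes_R R_S \hookrightarrow B \otimes_R R_S$ remains injective and tensoring with $M$ over $R_S$ preserves this), and since $M$ already carries an $R_S$-action the canonical map $M \to M \otimes_R R_S$ is an isomorphism. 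Hence $M = M \otimes_R R_S \in \Flat R \otimes_R R_S$.

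For $(ii)$, the perfectness of $R/sR$ gives $\Flat R/sR = \Prj R/sR$, so the statement reduces to
\[ \Add(\Flat R \otimes_R R/sR) = \Prj R/sR. \]
If $F \in \Flat R$, then $F \otimes_R R/sR$ is flat over $R/sR$ by base change, hence projective by the perfectness assumption; taking $\Add$ yields $\Add(\Flat R \otimes_R R/sR) \subseteq \Prj R/sR$. Conversely, every projective $R/sR$-module is a direct summand of a free module of the form $(R/sR)^{(I)} \cong R^{(I)} \otimes_R R/sR$, which is visibly in $\Flat R \otimes_R R/sR$; so $\Prj R/sR \subseteq \Add(\Flat R \otimes_R R/sR)$.

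The argument is essentially bookkeeping, so there is no serious obstacle. The one point worth a second of care is the inclusion $\Flat R_S \subseteq \Flat R \otimes_R R_S$ in $(i)$, which uses both the transfer of flatness along the flat extension $R \to R_S$ and the identity $M \otimes_R R_S \cong M$ for $R_S$-modules $M$; in $(ii)$ one must remember to pass to $\Add$ because a projective $R/sR$-module need not itself be of the literal shape $F \otimes_R R/sR$ with $F$ flat over $R$.
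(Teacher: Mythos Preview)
Your proof is correct and follows essentially the same approach as the paper: both parts are established at the level of the underlying module classes, with (i) using flatness of $R_S$ over $R$ together with the isomorphism $M\cong M\otimes_R R_S$ for $R_S$-modules, and (ii) using perfectness to reduce to projectives. The only cosmetic difference is that for the inclusion $\Prj R/sR\subseteq \Add(\Flat R\otimes_R R/sR)$ the paper invokes its earlier Example~\ref{Examples}$(i)$, whereas you spell out the free-module argument directly.
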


\begin{proof}
 $(i)$ We show that $\Flat R\otimes_R R_S= \Flat R_S$, so the result follows.
Note that  $\Flat R\otimes_R R_S\subset \Flat R_S$. Now let $X$ be an object in $\Flat R_S$. Since $R_S$ is a flat $R$-module, then $X$ is a flat $R$-module. Moreover, since  $X$ is $R_S$-module, we have  $X \simeq X\otimes_R R_S$. Hence $X$ is an object in $\Flat R\otimes_R R_S$ and therefore $\Flat R_S\subset \Flat R\otimes_R R_S$.

$(ii)$  Note that, for every $s\in S$, $\Flat R\otimes_R R/sR\subset \Flat R/sR$.  Now let $X\in \Flat R/sR$. Since  for every $s\in S$, $R/sR$ is perfect so we have $\Flat R/sR=\Prj R/sR$. Then Example \ref{Examples}$(i)$ implies that, for every $s\in S$,  $\Prj R/sR=\Add(\Prj R\otimes_R R/sR)$, so $X$ is in $\Add(\Prj R\otimes_R R/sR)\subset \Add(\Flat R\otimes_R R/sR)$.
\end{proof}

\begin{theorem}\label{Krull Dimension}
Let $R$ be a commutative Noetherian ring of Krull dimension $1$ and $S\subset R$ be a multiplicative subset consisting of some nonzero-divisors in $R$. Then $R$ is $S$-almost perfect if and only if $\K(\Flat R)$ is $S$-almost well generated.
\end{theorem}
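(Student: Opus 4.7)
The plan is to observe that one direction, namely that $S$-almost perfectness implies that $\K(\Flat R)$ is $S$-almost well generated, is already covered by Theorem \ref{S-Almost WG} together with Example \ref{Examples}$(ii)$, since in that case $\K(\Flat R)=\K(\SSF)$. So the work lies entirely in the converse, and I will show it by separately verifying the two perfectness conditions: that $R_S$ is perfect and that $R/sR$ is perfect for every $s\in S$.

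The condition on the quotients $R/sR$ is essentially free under the stated hypotheses. Since $R$ is Noetherian of Krull dimension $1$ and $s\in S$ is a nonzero-divisor, the principal ideal $sR$ has height $1$, hence $R/sR$ is Noetherian of Krull dimension $0$. A Noetherian ring of Krull dimension $0$ is Artinian, and Artinian rings are perfect. So this condition holds without using the $S$-almost well generated assumption at all.

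The heart of the argument is therefore showing that $R_S$ is perfect. Assume $\K(\Flat R)$ is $S$-almost well generated, so in particular $\K(\Add(\Flat R\otimes_R R_S))$ is well generated. By Lemma \ref{FlatIsoFlatR_SFlatR/sR}$(i)$ we have $\Flat R\otimes_R R_S=\Flat R_S$, and the class $\Flat R_S$ is already closed under arbitrary direct sums and under direct summands, so $\Add(\Flat R\otimes_R R_S)=\Flat R_S$. Thus the hypothesis gives that $\K(\Flat R_S)$ is well generated. By the result of \v{S}\v{t}ov\'{i}\v{c}ek recalled in the abstract, $\K(\Flat A)$ is well generated if and only if $A$ is a perfect ring; applying this with $A=R_S$ yields that $R_S$ is perfect, as required.

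Combining these two facts, $R_S$ is perfect and every $R/sR$ is perfect, so $R$ is $S$-almost perfect by definition, finishing the converse. The only step that genuinely uses content beyond elementary commutative algebra is the invocation of \v{S}\v{t}ov\'{i}\v{c}ek's theorem, while the Krull dimension $1$ and nonzero-divisor hypotheses are used exclusively to make the conditions on $R/sR$ automatic; I do not anticipate any subtle obstacle here, since all the nontrivial ingredients are already in place from the preceding sections.
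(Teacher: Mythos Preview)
Your proposal is correct and follows essentially the same approach as the paper's proof: one direction comes from Theorem~\ref{S-Almost WG}, the perfectness of $R_S$ is deduced from Lemma~\ref{FlatIsoFlatR_SFlatR/sR}$(i)$ together with \v{S}\v{t}ov\'{i}\v{c}ek's theorem, and the perfectness of each $R/sR$ is obtained from the Noetherian Krull dimension~$1$ hypothesis and the nonzero-divisor assumption, making $R/sR$ Artinian. Your treatment is slightly more explicit in justifying $\Add(\Flat R\otimes_R R_S)=\Flat R_S$ and in the height argument for $sR$, but the logical structure is identical.
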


\begin{proof}
By Theorem \ref{S-Almost WG}, it is enough to show that if $\K(\Flat R)$ is $S$-almost well generated, then $R$ is $S$-almost perfect. Since $\K(\Flat R)$ is $S$-almost well generated, $\K(\Flat R\otimes_R R_S)$ and $\K(\Add(\Flat R\otimes_R R/sR))$  are well generated. Lemma \ref{FlatIsoFlatR_SFlatR/sR} $(i)$ implies that $\K(\Flat R_S)$  is well generated. Hence by using \cite[Theorem 5.2]{St}, $R_S$ is perfect. To complete the proof, we need to show that for every $s\in S$, $R/sR$ is also perfect. To see this, we just note that the assumption on the ring $R$ implies that the quotient ring $R/sR$, for every $s\in S$, is Artinian and hence is perfect.
\end{proof}

\begin{remark}
Assume that $\K(\Flat R)$ is $S$-almost well generated. By the proof of Theorem \ref{Krull Dimension}, we conclude that the ring $R_S$ is always perfect. However, for arbitrary ring $R$, we do not know whether the quotient ring $R/sR$, for  $s \in S$, is perfect.
\end{remark}

\section*{Acknowledgments}
The authors thank Peter Jørgensen and Amnon Neeman for answering questions and their insightful comments. The work of the first author is based on research funded by Iran National Science Foundation (INSF) under project No. 4001480. The research of the second author was supported by a grant from IPM. This work is partly done during a visit of the authors to the Institut des Hautes \'{E}tudes Scientifiques (IHES), Paris, France. They would like to express their gratitude for the support and excellent atmosphere at IHES.

\end{document}